\documentclass[12pt]{article}

\listfiles
\usepackage{xypic}
\usepackage{comment}
\usepackage{amsmath,amstext,amsbsy,amssymb,amsthm}
\usepackage{mine}
\normalbaselineskip=24pt\normalbaselines
\normalbaselineskip=20pt\normalbaselines
\begin{document}
\selectfont
\nocite{Kohl1996}
\define\div{\big|}
\define\ndiv{\not\div}
\define\QI{ \Bbb Q(\zeta_{\infty}) }
\define\QWI{ \Bbb Q(w_{\infty})}
\define\QWII{ \Bbb Q(w_{\infty},\zeta_{\infty}) }
\define\DI{ \Delta_{\infty} }
\define\NI{N_{\infty}}
\define\HI{ H_{\infty}}
\define\TI{\QI\otimes_{\Bbb Q}}
\define\DSG{\underset{i\geq 1}{\oplus} \mathbb{Q}(\zeta_{p^i})N_{p^i}}
\define\DSH{\underset{i\geq 1}{\oplus} H_{p^i}}
\define\TPL{(\dots,0,b_{n},b_{n-1},\dots,b_{2},b_{1})}
\define\ZPL{(\dots,0,b_{k},0,\dots,0,0)}
\define\DTPL{(\dots,0,{}^{\dl_{p^n}} b_{n} ,{}^{\dl_{p^{n-1}}}b_{n-1},\dots,{}^{\dl_{p^2}}b_{2},{}^{\dl_{p}}b_{1})}
\define\DDTPL{(\dots,0,{}^{\dl_{p^n}^{e_{n}}} b_{n} ,{}^{\dl_{p^{n-1}}^{e_{n-1}}}b_{n-1},\dots,{}^{\dl_{p^2}^{e_{2}}}b_{2},{}^{\dl_{p}^{e_1}}b_{1})}
\define\sg{\sigma}
\define\dl{\delta}
\def\keywords#1{\def\@keywords{#1}}
\parskip=0.125in
\title{A Class of Profinite Hopf-Galois Extensions over Q}
\author{Timothy Kohl\\
Department of Mathematics and Statistics\\
Boston University\\
Boston, MA 02215\\
tkohl@math.bu.edu}
\maketitle
\begin{abstract}
For $p$ a prime and $a\in\mathbb{Q}$, where $a$ is not a $p^n$-th power of any rational number, the extension $\mathbb{Q}(w_n)/\mathbb{Q}$ where $w_n=\root p^n \of a$ is separable but non-normal. The Hopf-Galois theory for separable extensions was determined by Greither and Pareigis, and the specific classification for radical extensions such as these by the author. In this work we extend this theory to a certain class of profinite extensions, namely those formed from the union of these $\mathbb{Q}(w_n)$. We construct a 'profinite' Hopf algebra which acts, and show that it satisfies a generalization of a result due to Haggenm{\"u}ller and Pareigis on the structure of Hopf algebra forms of group algebras. 
\end{abstract}
\noindent {\it Key words:} Hopf-Galois extension, Greither-Pareigis theory\par
\noindent {\it MSC:} 16W30,12F10\par
\renewcommand{\thefootnote}{}\centerline{Introduction}
A separable field extension $K/k$ is Hopf-Galois if there is $k$-Hopf algebra $H$ as well as a $k$-algebra map $\mu:H\rightarrow End_k(K)$ such that for $h\in H$ and $a,b\in K$, one has
\begin{equation}
\label{measuring}
\mu(h)(ab)=\sum_{(h)}\mu(h_{(1)})(a)\mu(h_{(2)})(b)
\end{equation}
where for $\Delta:H\leftarrow H\otimes H$ the co-multiplication of $H$,
$$
\Delta(h)=\sum_{(h)}h_{(1)}\otimes h_{(2)}
$$
and where $\mu$ induces a $k$-algebra isomorphism
\begin{equation}
\label{linindchar}
1\#\mu:K\#H\rightarrow End_k(K)
\end{equation}
and
\begin{equation}
\label{fixedfield}
K^H=\{x\in K | \mu (h)(x)=\epsilon (h)x \ \forall h\in H\}=k
\end{equation}
where $\epsilon:H\rightarrow k$ is the co-unit map of $H$. All these properties generalize what happens when $K/k$ is Galois with $G=Gal(K/k)$ for then one may set $H=k[G]$ with $\Delta(g)=g\otimes g$ and $\epsilon(g)=1$ and where the map $\mu:H\rightarrow End_k(K)$ is obvious. In this situation \eqref{measuring} is simply the multiplicative action of the group elements extended linearly to sums in $H$, \eqref{fixedfield} is due to $K^G=k$ and \eqref{linindchar} is linear independence of characters. As such, a Hopf-Galois structure for some other Hopf algebra (where indeed $K/k$ may not even be Galois in the first place) is a way to generalize these three fundamental aspects of ordinary (classical) Galois theory of fields. Indeed the case where $K/k$ is separable, but {\it not} Galois is the starting point of the work of Greither and Pareigis \cite{GP}. The primordial example they begin with (and one which is, broadly generalized, part of the family of extensions we consider in this work) is $\mathbb{Q}(\root 3 \of 2)/\mathbb{Q}$ which is non-normal due to the lack of roots of unity in the ground field $\mathbb{Q}$ of course. Nonetheless, a certain rank 3 Hopf algebra can be shown to act on this extension to make it Hopf-Galois. Rather than give the example as it is presented in \cite{GP} we shall look at all such radical extensions in general.\par
\section{Greither-Pareigis Theory}
The setup in \cite{GP} is as follows. For $K/k$ a finite separable extension of fields, with Galois closure $\tilde{K}/K$, let $\Gamma=Gal(\tilde{K}/k)$ and $\Delta=Gal(\tilde{K}/K)$. The natural action of $\Gamma$ on the left cosets $S=\Gamma/\Delta$ yields a map $\lambda:\Gamma\rightarrow Perm(S)=B$. Note that for $\Delta$ trivial $\lambda$ is the left regular representation of $\Gamma$ in its group of permutations. Also one needs the following definition.
\begin{definition}
A regular subgroup $N\leq Perm(S)$ is one that acts transitively and fixed point freely on the elements of $S$. That is the orbit of any element of $S$ under the action of $N$ is all of $S$ and if $n(s)=s$ for any $s\in S$ then $n$ is the identity element. 
\end{definition}
A consequence of regularity is that for any such $N$, $|N|=|S|$. With these definitions in mind, one has
\begin{theorem}{\cite[Theorem 2.1]{GP}}
\label{GPmain}
Given $K/k$ and $B=Perm(\Gamma/\Delta)$ for $\Gamma$, $\Delta$ as above, the following are equivalent:\par
\noindent (a) There is a $k$-Hopf algebra $H$ making $K/k$ $H$-Galois.\par
\noindent (b) There is a regular subgroup $N\leq B$ such that $\lambda(\Gamma)\leq Norm_B(N)$\par
\noindent Moreover by Galois descent $H$ can shown to be $(\tilde{K}[N])^{\Gamma}$, the fixed ring under the diagonal (simultaneous) action of $\Gamma$ on $\tilde{K}$ (via the Galois action) and on $N$ by conjugation in $B$ by the elements of $\lambda(\Gamma)$.
\end{theorem}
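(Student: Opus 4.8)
The plan is to prove the equivalence and the structural description at once, by Galois descent along the Galois closure $\tilde{K}/k$: this reduces the classification of Hopf-Galois structures on the (generally non-normal) extension $K/k$ to the tractable \emph{split} situation over $\tilde{K}$. First I would base change everything to $L:=\tilde{K}$. Since $K/k$ is separable, $L\otimes_k K$ is split étale, so $L\otimes_k K\cong \mathrm{Map}(S,L)$ as $L$-algebras, where $S$ is the set of $k$-embeddings $K\hookrightarrow L$, identified $\Gamma$-equivariantly with $\Gamma/\Delta$; the residual action of $\Gamma=Gal(L/k)$ on $\mathrm{Map}(S,L)$ is $\sigma\cdot f=\sigma\circ f\circ\lambda(\sigma)^{-1}$, i.e. Galois on values and left translation $\lambda$ on $S$. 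Faithfully flat (Galois) descent then yields an equivalence between $k$-Hopf algebras $H$ making $K/k$ Hopf-Galois and $L$-Hopf algebras $H'$ making $\mathrm{Map}(S,L)/L$ Hopf-Galois that carry a semilinear $\Gamma$-descent datum compatible with the action above, the equivalence being $H=(H')^{\Gamma}$.

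The second step classifies the split objects. Dualizing the action, $\mathrm{Map}(S,L)$ becomes an $(H')^{*}$-comodule algebra, and the isomorphism condition \eqref{linindchar} says exactly that $\mathrm{Spec}(\mathrm{Map}(S,L))\cong S$ is a principal homogeneous space (torsor) under the finite group scheme $G'=\mathrm{Spec}((H')^{*})$. Because $\mathrm{Map}(S,L)=L^{S}$ is a product of copies of $L$, the torsor has $L$-rational points and is therefore split, which forces $G'$ to be a constant group $\underline{N}$ for a finite group $N$ acting simply transitively on $S$ — that is, $N$ is a \emph{regular} subgroup of $B=Perm(S)$, whence $|N|=|S|$ — and correspondingly $H'\cong L[N]$ as a Hopf algebra, with $N\le Perm(S)$ acting on $\mathrm{Map}(S,L)$ in the evident way. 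Conversely every regular $N\le B$ produces such an $H'$, so this is a bijection between $L$-Hopf-Galois structures on the split extension and regular subgroups of $B$.

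Finally I would transport the descent datum through this bijection. A semilinear $\Gamma$-action on $H'=L[N]$ must send grouplikes to grouplikes, hence permutes the grouplike basis $N$; compatibility with $\sigma\cdot f=\sigma\circ f\circ\lambda(\sigma)^{-1}$ forces this permutation to be conjugation by $\lambda(\sigma)$ inside $B$. Such a datum therefore exists if and only if $N$ is stable under conjugation by all of $\lambda(\Gamma)$, i.e. $\lambda(\Gamma)\le Norm_B(N)$, which is precisely condition (b). When it holds, the descended Hopf algebra is the fixed ring $H=(L[N])^{\Gamma}=(\tilde{K}[N])^{\Gamma}$ for the stated diagonal action (Galois on $\tilde{K}$, conjugation by $\lambda(\Gamma)$ on $N$), giving the ``moreover'' clause; running the correspondence backwards turns a given $H$ into a regular normalized $N$, establishing (a)$\Rightarrow$(b). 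The classical Galois case $\Delta=1$, $H=k[\Gamma]$, $N=\lambda(\Gamma)$ serves as the sanity check.

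The main obstacle I expect is the split-case classification of step two: rigorously identifying ``$\mathrm{Map}(S,L)/L$ is $H'$-Hopf-Galois'' with ``$S$ is a torsor under a regular $N$.'' One must show that \eqref{linindchar} over $L$ forces the comodule structure to come from an honest constant group scheme — not some infinitesimal or non-split form — and this is exactly where separability of $K/k$ enters, guaranteeing that $L\otimes_k K$ is split and hence that the torsor has a rational point and trivializes. A secondary but unavoidable point is checking that descent is \emph{faithful} on all three axioms \eqref{measuring}, \eqref{linindchar}, \eqref{fixedfield}: that each descends from $L$ to $k$, and that the normalizer condition $\lambda(\Gamma)\le Norm_B(N)$ is not merely sufficient but \emph{necessary} for a compatible descent datum to exist.
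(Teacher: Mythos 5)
Your proposal is correct, but note that the paper itself gives no proof of this statement --- it is quoted verbatim from Greither--Pareigis \cite[Theorem 2.1]{GP} as background. Your argument (base change to the Galois closure, classification of Hopf--Galois structures on the split extension $\mathrm{Map}(S,\tilde{K})/\tilde{K}$ via trivialized torsors under $\mathrm{Spec}((H')^{*})$ forcing $H'\cong \tilde{K}[N]$ for $N$ regular, then existence of a compatible descent datum being equivalent to $\lambda(\Gamma)\leq Norm_B(N)$ with descended algebra $(\tilde{K}[N])^{\Gamma}$) is essentially the original Greither--Pareigis proof that the citation points to, so there is nothing to correct.
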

The Hopf algebra given as the fixed ring has the further property that $\tilde{K}\otimes H\cong \tilde{K}\otimes k[N]$, that is, in the language of descent, $H$ is a $\tilde{K}$-form of the group ring $k[N]$, i.e. the two $k$-Hopf algebras become isomorphic as $\tilde{K}$ Hopf algebras. The enumeration of Hopf-Galois structures on a given $K/k$ amounts to the enumeration of the regular subgroups of $B$ normalized by $\lambda(\Gamma)$. Much recent work has focused on the case where $K/k$ is already Galois with group $\Gamma$ and therefore one is searching for regular subgroups of $Perm(\Gamma)$ normalized by the left-regular representation of $\Gamma$. However, we shall be considering cases closer in spirit to the original motivation for this subject, namely the construction of Hopf-Galois structures on non-normal separable extensions. Of particular relevance to the current discussion is the following class of extensions as defined in \cite[Proposition 4.1]{GP}
\begin{definition}
\label{almostclassical}
If $K/k$ is a separable extension with $\Gamma$ and $\Delta$ as above then if there exists $N\triangleleft\Gamma$ such that $N\cap\Delta=\{e\}$ and $\Gamma=N\Delta$ (i.e. $N$ is a normal complement to $\Delta$ in $\Gamma$) then $K/k$ is Hopf-Galois for $H=(E[N^{opp}])^{\Delta}$ where $E=\tilde{K}^{N}$ and $N^{opp}=Cent_B(N)$. Such a Hopf-Galois structure is termed {\it almost classical}.
\end{definition}
This bears some exploration in terms of the construction of the Hopf algebra which acts. If $N$ is a normal subgroup of $\Gamma$ with fixed field $E$ then have
$$\diagram
& \tilde{K} \dlline_{N} \ddline^{\Gamma} \drline^{\Delta} \\
E \drline^{\Delta} & &K \dlline \\
&k
\enddiagram$$
where $Gal(E/k)\cong\Delta$ by natural irrationality. Observe that $N$ being a normal subgroup of $\Gamma$ means that, if we identify $N$ with $\lambda(N)$ embedded in $B$ that $\lambda(N)$ is normalized by $\lambda(\Gamma)$. Moreover, since $N$ is a normal complement to $\Delta$ in $\Gamma$ then (viewed inside $B$), $N$ is regular. As such, $N$ itself gives rise to a Hopf-Galois structure with Hopf algebra $(\tilde{K}[N])^{\Gamma}$. However, for $N$ a regular subgroup it's readily shown that $N^{opp}=Cent_B(N)$ is also regular and moreover that $Norm_B(N)=Norm_B(N^{opp})$ which is a direct analogue of the relationship between the left and right regular representations of a group $G$ in its group of permutation (whose common normalizer is the holomorph $Hol(G)$). The Hopf algebra that arises using $N^{opp}$ is of course $(\tilde{K}[N^{opp}])^{\Gamma}$, but since $\Gamma=N\Delta$ and since $N$ centralizes $N^{opp}$ and has fixed field $E$ then
$$
(\tilde{K}[N^{opp}])^{N\Delta}=((\tilde{K}[N^{opp}])^N)^{\Delta}=(E[N^{opp}])^{\Delta}
$$
which is basically the observation made in \cite[Corollary 3.2]{GP}. For the case where $N$ is abelian then $N^{opp}=N$ and $H=(E[N])^{\Delta}$ whose action even more so merits the adjective {\it almost classical} since the action is based on the existence and action of a subgroup of the Galois closure of $K/k$. For the cases we shall be studying this will be the situation since $N$ will be cyclic of prime power order. Indeed it is a natural extension of the author's work in \cite{Kohl1998} enumerating the Hopf-Galois structures on radical extensions $k(w)/k$ with $w\not\in k$ where $char(k)=0$ and $w^{p^n}=a\in k$ ($p$ an odd prime) where $k$ contains at most a ${p^r}$-th root of unity but not a $p^{r+1}$ root of unity. 
\begin{theorem}{\cite[Theorem 3.3 and Theorem 4.5]{Kohl1998}}
\label{mine}
 The radical extension $k(w)/k$ given above has exactly $p^r$ Hopf-Galois structures for $0\leq r< n$ and $p^{n-1}$ for $r=n$, of which $p^{min(r,n-r)}$ are almost classical and for all, the associated group $N$ is cyclic of order $p^n$. 
\end{theorem}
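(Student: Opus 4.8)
The plan is to apply the Greither--Pareigis correspondence (Theorem~\ref{GPmain}), so that counting Hopf--Galois structures becomes counting the regular subgroups $N\leq B=Perm(\Gamma/\Delta)$ normalized by $\lambda(\Gamma)$. First I would make $\Gamma,\Delta,\lambda$ explicit. The Galois closure is $\tilde K=k(w,\zeta_{p^{n}})$; writing a typical element of $\Gamma$ as $w\mapsto \zeta_{p^{n}}^{\,b}w,\ \zeta_{p^{n}}\mapsto\zeta_{p^{n}}^{\,c}$ identifies $\Gamma$ with the affine group $\{(c,b)\}$ under $(c,b)(c',b')=(cc',cb'+b)$, where the hypotheses on roots of unity force $c\equiv 1\ (\mathrm{mod}\ p^{r})$ and $[\tilde K:k(\zeta_{p^{n}})]=p^{n}$. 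For $r\geq 1$ the linear part is $C=1+p^{r}(\mathbb{Z}/p^{n}\mathbb{Z})$, cyclic of order $p^{n-r}$, while for $r=0$ it is the full unit group $(\mathbb{Z}/p^{n}\mathbb{Z})^{*}$. Since $\Delta$ is the stabilizer $\{(c,0)\}$ of $w$, the coset space is parametrized by $b\in\mathbb{Z}/p^{n}\mathbb{Z}$ and $\lambda(c,b)$ is the permutation $x\mapsto cx+b$; thus $\lambda$ is faithful and $G:=\lambda(\Gamma)\leq AGL(1,\mathbb{Z}/p^{n}\mathbb{Z})$ contains the regular translation group $T\cong\mathbb{Z}/p^{n}\mathbb{Z}$, which is normal in $\Gamma$ and (being abelian, so $T^{opp}=T$) furnishes the basic almost classical structure anchoring the count.

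The structural core is the claim that every such $N$ is cyclic of order $p^{n}$. Identifying $\Gamma/\Delta$ with $N$ by a base point turns $Norm_{B}(N)$ into the holomorph $Hol(N)=N\rtimes Aut(N)$, so the normalization hypothesis reads as an inclusion $G\leq Hol(N)$ of $p$-groups. I would analyze this through the projection $\pi\colon Hol(N)\to Aut(N)$ with kernel $N$: the regular cyclic subgroup $T\leq G$ is encoded by a bijective crossed homomorphism $T\to N$ relative to $\theta=\pi|_{T}$, and the linear part $C$ pins down $\theta$ and the intersection $G\cap N$. The requirement that $T$ be simultaneously regular and of order $p^{n}$ inside $Hol(N)$, compatibly with the action of $C$, is what forces $N$ to contain an element of order $p^{n}$, whence $N\cong\mathbb{Z}/p^{n}\mathbb{Z}$ and $Hol(N)=AGL(1,\mathbb{Z}/p^{n}\mathbb{Z})$.

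With cyclicity in hand I would enumerate by the level $m$ with $|N\cap T|=p^{m}$. For the subgroups that actually lie in $G$ (equivalently, are affine relative to $T$) one checks that regularity forces the defining ``slope'' of $N$ to be a unit, so $N$ is generated by an element $x\mapsto cx+v$ with $v$ a unit; normalization by $T$ and by $C$ then reduce to the congruences $2m\geq n$ and $m\geq n-r$ on the admissible levels, and summing the telescoping count $\sum\varphi(p^{j})$ over those levels gives the affine contribution. The almost classical structures are precisely the $N$ that are normal in $\Gamma$, i.e. the normal complements to $\Delta$; these correspond bijectively to the retractions $\Gamma\to\Delta$, equivalently the $\Delta$-invariant homomorphisms $T\to\Delta$, and a direct count of the latter yields $p^{\min(r,n-r)}$. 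The remaining, non-affine regular subgroups---those with $N\not\leq G$, which occur exactly when $r>n/2$---are recovered from the crossed-homomorphism description of $T\leq Hol(N)$, and assembling both contributions gives the total $p^{r}$.

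The main obstacle is twofold. The cyclicity lemma is the delicate point: it cannot be read off from $|G|$ alone but needs the joint constraint that $T$ is regular cyclic of full order while $C$ acts on $N$ as prescribed, and ruling out the non-cyclic abelian and the non-abelian possibilities is where the real work lies. The second difficulty is completing the count for $r>n/2$, where the regular subgroups are no longer all contained in $\lambda(\Gamma)$, so the affine tally undercounts and one must extract the extra subgroups from the holomorph before verifying that the grand total collapses to exactly $p^{r}$. Finally the two extremes need separate bookkeeping: when $r=0$ the linear part is the whole unit group and only $N=T$ survives, giving $p^{0}=1$, while when $r=n$ the extension is cyclic Kummer, $\Delta$ is trivial, and the enumeration specializes to regular subgroups of $Perm(\mathbb{Z}/p^{n}\mathbb{Z})$ normalized by the regular representation of $\mathbb{Z}/p^{n}\mathbb{Z}$, producing $p^{n-1}$.
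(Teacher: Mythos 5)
The first thing to note is that the paper never proves this statement: Theorem \ref{mine} is imported wholesale from \cite{Kohl1998} (Theorems 3.3 and 4.5), so the only proof to measure your attempt against is the one in that reference. Your outline does follow the same route as that proof: reduce via Theorem \ref{GPmain} to counting regular subgroups $N\leq B=Perm(\Gamma/\Delta)$ normalized by $\lambda(\Gamma)$, realize $\Gamma$ as an affine group acting on $\mathbb{Z}/p^n\mathbb{Z}$ with translation subgroup $T$ and linear part $C$, prove that every admissible $N$ is cyclic of order $p^n$, and then enumerate inside the holomorph using $Norm_B(N)\cong Hol(N)$. Some of your reductions are correct and essentially complete as stated; in particular the almost classical count is right: almost classical structures correspond to normal complements of $\Delta$, hence to retractions $\Gamma\to\Delta$, hence to $\Delta$-invariant homomorphisms $T\to\Delta$, and invariance forces such a homomorphism to kill $p^rT$, giving $|Hom(\mathbb{Z}/p^r\mathbb{Z},\mathbb{Z}/p^{n-r}\mathbb{Z})|=p^{\min(r,n-r)}$. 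Likewise, your observation that regular subgroups lying outside $\lambda(\Gamma)$ must occur exactly when $r>n/2$ is consistent with comparing the two counts.

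Judged as a proof rather than a plan, however, the proposal has genuine gaps, and they sit exactly where the content of the cited theorems lies. First, the cyclicity lemma is not proved: the sentence claiming that regularity of $T$ inside $Hol(N)$ ``forces $N$ to contain an element of order $p^n$'' is the assertion to be established, not an argument for it, and you explicitly defer ``ruling out the non-cyclic abelian and the non-abelian possibilities'' to unspecified real work. Any correct argument must use that $p$ is odd --- for $p=2$ the statement is false (cyclic extensions of degree $2^n$, $n\geq 3$, admit dihedral and quaternion type structures, i.e. $Hol(N)$ for those non-cyclic $N$ does contain regular cyclic subgroups) --- and oddness never enters your sketch, which is a sign the key mechanism is missing. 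Second, the enumeration is not completed: the congruences $2m\geq n$ and $m\geq n-r$ are asserted without derivation, so the telescoping sum cannot be checked; the parenthetical ``equivalently, are affine relative to $T$'' silently conflates lying in $G=T\rtimes C$ with lying in $Hol(T)=T\rtimes Aut(T)$, which are different conditions; and the extra subgroups needed when $r>n/2$ are ``recovered'' only in the sense of being mentioned. Consequently neither total $p^r$ nor the Kummer-case total $p^{n-1}$ is actually derived. A smaller but real slip: for $r=0$ the hypothesis does not make the linear part the full unit group $(\mathbb{Z}/p^n\mathbb{Z})^{*}$; the roots-of-unity condition pins down $C$ only for $r\geq 1$ (where the subgroups of $1+p^r\mathbb{Z}/p^n\mathbb{Z}$ form a chain), while for $r=0$ one knows only that $C\not\subseteq 1+p\mathbb{Z}/p^n\mathbb{Z}$, and the uniqueness argument must be run from that weaker datum. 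In short, your skeleton matches the proof in \cite{Kohl1998}, but the two load-bearing steps --- cyclicity of $N$ and the closed-form counts --- are named rather than proved.
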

\noindent Note that for $r=0$ such a radical extension has exactly one Hopf-Galois structure, and it is almost classical.
\section{Radical Extensions of Q}
Here we shall consider the radical extensions $\mathbb{Q}(a^{1/ p^n}) / \mathbb{Q}$ for $p$ an odd prime, where $a$ is not a $p^{n}$-th power of any rational number. Since $\mathbb{Q}$ contains no $p^{n}$-th roots of unity, these extensions are acted on by a unique Hopf algebra. We shall discuss their structure and how they relate to Greither Pareigis theory and the study of Hopf algebra forms in \cite{HP}. Later, we shall construct a profinite Hopf algebra form that generalizes Theorem 5 of \cite{HP} and show that this form acts on the direct limit (union) of the extensions $\mathbb{Q}(a^{1/p^n})$. We will also view this action in terms of regularity to try and extend the results of Greither and Pareigis to such profinite separable extensions.\par
\noindent We shall use the following notation:
\begin{align*}
w_{n} &= a^{1/p^n}, a \in \mathbb{Q} \  ( a \text{ not already a } p^{n-th} \text{ power}) \\
\zeta_{n} &= \text{ a  primitive }  p^{n-th} \text{  root  of  unity}
\end{align*}
\begin{align*}
N_{n} &= Gal(\mathbb{Q}(\zeta_{n},w_{n})/\mathbb{Q}(\zeta_{n})) \\
\Delta_{n} &= Gal(\mathbb{Q}(\zeta_{n},w_{n})/\mathbb{Q}(w_{n})) \\
\Gamma_{n} &= Gal(\mathbb{Q}(w_{n},\zeta_{n}) / \mathbb{Q})
\end{align*}
which is diagrammed below.
$$\diagram
& \mathbb{Q}(\zeta_{n},w_{n}) \dlline_{N_{n}} \ddline^{\Gamma_{n}} \drline^{\Delta_{n}} \\
\mathbb{Q}(\zeta_{n}) \drline & &\mathbb{Q}(w_{n}) \dlline \\
&\mathbb{Q}
\enddiagram$$
Now $\mathbb{Q}(w_{n})/\mathbb{Q}$ is a separable, non-normal extension which is $H$-Galois, for $H$ exhibited below. Observe that $\mathbb{Q}(\zeta_{n})/\mathbb{Q}$ is such that $\mathbb{Q}(\zeta_{n}) \mathbb{Q}(w_{n})  = \  \mathbb{Q}(\zeta_{n},w_{n})$ the normal closure of  $\mathbb{Q}(w_{n})/\mathbb{Q}$.  Hence  $\mathbb{Q}(w_{n})/\mathbb{Q}$
is almost classically Galois.
To compute the relevant Hopf algebra form we proceed as follows.
There is a natural action:
$$\mathbb{Q} ( \zeta_{n})[N_{n}]\otimes\mathbb{Q} (\zeta_{n},w_{n})\longrightarrow\mathbb{Q} (\zeta_{n},w_{n})$$ 
Here $N_{n} =  \langle \sg_{n} \rangle $ is cyclic of order $p^n$ where 
$\sg_{n}(w_{n})=\zeta_{n}w_{n}$ and $\Delta_{n} \cong Gal(\mathbb{Q}(\zeta_{n})/\mathbb{Q})$. This, in turn, is isomorphic to $(\Bbb Z / p^n \Bbb Z)^{\times}$
via the homomorphism $t:\Delta_{n}\longrightarrow (\Bbb Z/p^n\Bbb Z)^{\times}$ 
where, since $\Delta_{n} =  \langle \dl_{n} \rangle $ is cyclic of order $\phi(p^n)$,
we define $t(\dl_{n}) = \pi\in(\Bbb Z/p^n\Bbb Z)^{\times}$ a primitive root
mod $p^n$. Furthermore $\Delta_{n}\cong Aut(N_{n})$ via the map $\tau_{n}$ where we define $\tau_{n}({\dl_{n}})(\sg_{n}^{i}) = \sg_{n}^{it(\dl_{n})} =
\sg_{n}^{i\pi}$. Moreover, since $\mathbb{Q}(w_{n})$ and $\mathbb{Q}(\zeta_{n})$ are
linearly disjoint over $\mathbb{Q}$ we have 
$\Gamma_{n} = N_{n} \Delta_{n}\cong N_{n}\rtimes\Delta_{n}$ and by natural irrationality $\Delta_{n}$ may be viewed as $Gal(\mathbb{Q}(\zeta_{n})/\mathbb{Q})$. We can define then an action (the diagonal action) of $\Delta_{n}$ on the group ring
$\mathbb{Q}(\zeta_{n})[N_{n}]$ as follows:
$${}^{\dl_{n}}(r\zeta_{n}^{a}\sg_{n}^{b}) = r\zeta_{n}^{a\pi}\sg_{n}^{b\pi} \ where \ r\in\mathbb{Q}$$\par
\noindent Of course, $\Delta_{n}$ acts on $\mathbb{Q}(\zeta_{n},w_{n})$, so we may
pass from: $$\mathbb{Q}(\zeta_{n})[N_{n}]\otimes\mathbb{Q}(\zeta_{n},w_{n})\longrightarrow\mathbb{Q}(\zeta_{n},w_{n})$$
by descent to the corresponding action:$$(\mathbb{Q} (\zeta_{n})[N_{n}])^{\Delta_{n}}\otimes(\mathbb{Q} (\zeta_{n},w_{n}))^{\Delta_{n}}\longrightarrow(\mathbb{Q}(\zeta_{n},w_{n}))^{\Delta_{n}}$$      
By Theorem 5 of \cite{HP} we have that $(\Bbb  Q(\zeta_{n})[N_{n}])^{\Delta_{n}}$ 
is a $\mathbb{Q}(\zeta_{n})$-Hopf algebra form of $\mathbb{Q} N_{n}$, since
$\Delta_{n}=Gal(\mathbb{Q}(\zeta_{n})/ \mathbb{Q}))$ and $Aut(N_{n})\cong\Delta_{n}$.
We shall denote this Hopf algebra by $H_{n}$ and since
$\mathbb{Q}(\zeta_{n})\otimes_{\mathbb{Q}} H_{n}\cong \mathbb{Q}(\zeta_{n})[N_{n}]$,
then $rk_{\mathbb{Q}}(H_{n})$ = $p^n$.
That we have an action
$$H_{n}\otimes\mathbb{Q}(w_{n})\longrightarrow\mathbb{Q}(w_{n})$$ follows from
\cite[Proposition 4.1]{GP} which says that the almost classical extension 
$\mathbb{Q}(w_{n})/ \mathbb{Q}$ is Hopf-Galois for an $E$-Hopf algebra form of a
$\mathbb{Q} N$ where $N=Gal(E/\mathbb{Q})$ is a normal complement to $\Delta_{n}$
inside $\Gamma_{n}$. However, again by \cite[Theorem 3.7 and Theorem 4.5]{Kohl1998}, the only such subgroup is $N_{n}$ 
and so this is the only almost-classical structure for this extension.
\subsection{$H_{n}$ and how it acts}
The Hopf algebra $H_{n}$ is a fixed ring under the action of $\Delta_{n}$ as given above. We shall show that this is a relatively familiar object by constructing a basis for it.\par
\begin{proposition} 
\label{Hbasis}
$H_{n}$ is isomorphic to $(\mathbb{Q}N_{n})^*$ the linear dual of the group ring.
\end{proposition}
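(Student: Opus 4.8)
The plan is to exhibit an explicit $\mathbb{Q}$-basis of $H_n$ made of orthogonal idempotents and then to match the Hopf structure it inherits with the dual Hopf structure on $(\mathbb{Q}N_n)^*$. Because $\mathbb{Q}(\zeta_n)$ contains the primitive $p^n$-th root of unity $\zeta_n$, the group ring $\mathbb{Q}(\zeta_n)[N_n]$ splits completely through the characters $\sg_n\mapsto\zeta_n^{j}$, with central primitive idempotents
$$
e_j=\frac{1}{p^n}\sum_{b=0}^{p^n-1}\zeta_n^{-jb}\sg_n^{b},\qquad 0\le j\le p^n-1,
$$
satisfying $e_j^2=e_j$, $e_je_{j'}=0$ for $j\ne j'$, and $\mathbb{Q}(\zeta_n)[N_n]=\bigoplus_j\mathbb{Q}(\zeta_n)e_j$. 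These are the candidate images of the dual basis of $(\mathbb{Q}N_n)^*$.

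First I would verify that each $e_j$ lies in $H_n$, i.e. is fixed by the diagonal action of $\Delta_n$. Applying ${}^{\dl_n}$ term by term sends $\zeta_n^{-jb}\sg_n^{b}$ to $\zeta_n^{-jb\pi}\sg_n^{b\pi}$, and reindexing the sum by $c=b\pi$ (a bijection of $\mathbb{Z}/p^n\mathbb{Z}$ since $\pi$ is a unit) converts the exponent of $\sg_n$ to $c$ and that of $\zeta_n$ to $-jc$, returning $e_j$. In other words, the Galois twist on the scalars exactly cancels the permutation of the group elements, so $e_0,\dots,e_{p^n-1}\in H_n$.

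Next I would show these span $H_n$ over $\mathbb{Q}$. The same reindexing, applied to $\zeta_n^{a}e_j$, shows that ${}^{\dl_n}(r\,e_j)=({}^{\dl_n}r)\,e_j$ for every $r\in\mathbb{Q}(\zeta_n)$; hence $\Delta_n$ preserves each line $\mathbb{Q}(\zeta_n)e_j$ and acts on it as the full group $Gal(\mathbb{Q}(\zeta_n)/\mathbb{Q})$, since $\dl_n\mapsto(\zeta_n\mapsto\zeta_n^{\pi})$ is a generator. Taking $\Delta_n$-invariants componentwise then gives $H_n=\bigoplus_j(\mathbb{Q}(\zeta_n)e_j)^{\Delta_n}=\bigoplus_j\mathbb{Q}e_j\cong\mathbb{Q}^{p^n}$ as a $\mathbb{Q}$-algebra, in agreement with $rk_{\mathbb{Q}}(H_n)=p^n$ obtained by descent.

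Finally I would pin down the coalgebra side. As $\Delta_n$ acts by $\mathbb{Q}(\zeta_n)$-Hopf-algebra automorphisms, the comultiplication and counit of $H_n$ are the restrictions of those on $\mathbb{Q}(\zeta_n)[N_n]$; using $\sg_n^{b}=\sum_k\zeta_n^{kb}e_k$ one computes $\Delta(e_j)=\sum_{k+l\equiv j}e_k\otimes e_l$ (indices mod $p^n$) and $\epsilon(e_j)=1$ if $j=0$ and $0$ otherwise. Together with the orthogonality relations these are precisely the multiplication, comultiplication and counit of $(\mathbb{Q}N_n)^*$ under the dual-basis identification $e_j\leftrightarrow\delta_{\sg_n^{j}}$, so $e_j\mapsto\delta_{\sg_n^{j}}$ is a Hopf algebra isomorphism. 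The one genuinely delicate point is the invariance computation of the second paragraph — that the coefficient twist and the group permutation cancel on the Fourier idempotents; once that is in hand, the rest is bookkeeping.
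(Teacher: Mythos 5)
Your proof is correct, and its first half coincides with the paper's: the same Fourier idempotents $e_{n,i}$ and the same reindexing computation showing they are fixed by the diagonal $\Delta_n$-action. Where you genuinely diverge is in how the identification with $(\mathbb{Q}N_{n})^*$ is completed. The paper passes to the character group: the map $\sigma_n\mapsto\chi_n$ extends to a $\Delta_n$-equivariant isomorphism $\mathbb{Q}(\zeta_n)[N_n]\rightarrow\mathbb{Q}(\zeta_n)\widehat{N}_n$ sending $e_{n,i}$ to $\widehat{e}_{n,i}$ with $\widehat{e}_{n,i}(\sigma_n^k)=\delta_{ik}$, and the conclusion is drawn from this equivariance together with the duality $R[G]\cong R[\widehat{G}]\cong (R[G])^{*}$ cited from \cite[Theorem 7.10]{SC}. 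You instead stay inside the group ring: the semilinearity ${}^{\delta_n}(re_j)=({}^{\delta_n}r)e_j$ gives the line decomposition $\mathbb{Q}(\zeta_n)[N_n]=\bigoplus_j \mathbb{Q}(\zeta_n)e_j$ as a $\Delta_n$-module, whence $H_n=\bigoplus_j\mathbb{Q}e_j$ by taking invariants componentwise, and you then match multiplication, comultiplication ($\Delta(e_j)=\sum_{k+l\equiv j}e_k\otimes e_l$) and counit ($\epsilon(e_j)=\delta_{j,0}$) against the dual basis $\{\delta_{\sigma_n^j}\}$ of $(\mathbb{Q}N_n)^{*}$ by direct computation. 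Your route is more self-contained and it nails down the spanning step, which the paper treats only implicitly (through the equivariant isomorphism and the rank count $rk_{\mathbb{Q}}(H_{n})=p^n$); the paper's route, in exchange, exhibits the isomorphism as an instance of a general duality theorem and obtains the Hopf compatibility from equivariance rather than from structure-constant computations. One wording caution: $\Delta_n$ acts on $\mathbb{Q}(\zeta_n)[N_n]$ by automorphisms that are only semilinear over $\mathbb{Q}(\zeta_n)$, so they are not ``$\mathbb{Q}(\zeta_n)$-Hopf-algebra automorphisms''; your argument survives this slip because the explicit computations of $\Delta(e_j)$ and $\epsilon(e_j)$ show directly that the restricted structure maps land in $H_n\otimes_{\mathbb{Q}}H_n$ and $\mathbb{Q}$ respectively, which is exactly what descent along $\mathbb{Q}(\zeta_n)/\mathbb{Q}$ would otherwise be invoked to guarantee.
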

\begin{proof} 
\noindent We start by defining the elements:
$$e_{n,i}=\frac{1}{p^n}\sum_{j=0}^{p^n-1}\zeta_n^{-ij}\sg_n^{j}$$
for $i$ from 0 to $p^n-1$. For $\Delta_{n}=\langle\delta_n\rangle$ we have
\begin{align*}
{}^{\delta_n}e_{n,i} &= \frac{1}{p^n}\sum_{j=0}^{p^n-1}\zeta_n^{-\pi ij}\sg_n^{\pi j} \\ 
	       &= \frac{1}{p^n}\sum_{j=0}^{p^n-1}\zeta_n^{-i(\pi j)}\sg_n^{(\pi j)} \\
	       &= e_{n,i}
\end{align*}
since $(\pi,p)=1$. Therefore each $e_{n,i}$ is contained in $H_{n}$ and to show that these comprise a basis for $H_{n}$
we shall, along the way, identify $H_{n}$ as a familiar object. Specifically, given $N_{n}=\langle\sg\rangle$ we have the character group $\widehat{N}_{n}=\langle\chi_n\rangle$ where $\chi_n^{j}(\sg_n^{k})=\zeta_n^{jk}$. In \cite[Theorem 7.10]{SC} it is shown that for a connected commutative ring $R$, and finite abelian group $G$ where $|G|$ is invertible in $R$ that if $R$ contains a primitive $exp(G)^{th}$ root of unity that $R[G]\cong R[\hat{G}]\cong (R[G])^{*}=Hom_R(R[G],R)$. Our basis for $H_n$ uses basically this result. Since $N_{n}$ is cyclic, the map $\sg_n\mapsto\chi_n$ is a group isomorphism which can be extended by linearity to an isomorphism (of rings)
$\mathbb{Q}(\zeta)[N_{n}]\rightarrow \mathbb{Q}(\zeta) \widehat{N}_{n}$. If we let
$\Delta_{n}$ act on $\mathbb{Q}(\zeta)$ in the usual way and act on
$\widehat{N}_{n}$ by ${}^{\delta_n}\chi_n=\chi_n^{\pi}$ then
the above isomorphism is $\Delta_{n}$-equivariant. Under this map, 
$e_{n,i}\mapsto\widehat{e}_{n,i}$ where
$\widehat{e}_{n,i}=\frac{1}{p^n}\sum_{j=0}^{p^n-1}\zeta_n^{-ij}\chi_n^{j}$ and we have then that
\begin{align*}
\widehat{e}_{n,i}(\sg_n^{k}) &= \frac{1}{p^n}\sum_{j=0}^{p^n-1}\zeta_n^{-ij}\chi_n^{j}(\sg_n^{k}) \\
		       &= \frac{1}{p^n}\sum_{j=0}^{p^n-1}\zeta_n^{-ij}\zeta_n^{kj}\\
		       &= \frac{1}{p^n}\sum_{j=0}^{p^n-1}\zeta_n^{j(k-i)} \\
		       &= \delta_{ik}
\end{align*}
That is, we may identify the $\widehat{e}_{n,i}$'s with the $e_{n,i}$ constructed earlier and 
in doing so identify the $\mathbb{Q}$ span of these with $(\mathbb{Q} N_{n})^{*}$,
the $\mathbb{Q}$ dual of the group ring $\mathbb{Q} N_{n}$. Since the isomorphism
$\mathbb{Q}(\zeta)[N_{n}]\rightarrow \mathbb{Q}(\zeta)\widehat{N}_{n}$ is $\Delta_{n}$-equivariant
we conclude that $H_{n}\cong (\mathbb{Q} N_{n})^{*}$. \par
\end{proof}
\noindent This isomorphism is not unexpected in light of \cite[p.247 Remark 2]{GP} where the authors observe (due to \cite[p.39]{CS}) that if $X^n-a^n$ is irreducible over $k$ then $k(a)/k$ is Hopf-Galois where the Hopf algebra acting is the dual of the group ring. From \ref{mine} we know that this is, of course, the {\it only} Hopf-Galois structure. The isomorphism of $H_{n}$ with the dual of the group ring is not just merely a way to identify it with something familiar. It actually yields an interesting parallel when we look at how it acts on $\mathbb{Q}(w_{n})$.
\begin{proposition}
\label{Hact}
The action of $H_{n}$ on $\mathbb{Q}(w_{n})$ is as follows. If $i=0,\dots,p^n-1$ and $k=0,\dots p^{n-1}$ then $e_{n,i}(w_{n}^k)=\delta_{ik}w_{n}^k$.
\end{proposition}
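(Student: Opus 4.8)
The plan is to compute the action directly from the definition of $e_{n,i}$, using that the action of $H_n$ on $\mathbb{Q}(w_n)$ is nothing but the restriction of the natural action of $\mathbb{Q}(\zeta_n)[N_n]$ on $\mathbb{Q}(\zeta_n,w_n)$ to the $\Delta_n$-fixed subspaces $H_n=(\mathbb{Q}(\zeta_n)[N_n])^{\Delta_n}$ and $\mathbb{Q}(w_n)=\mathbb{Q}(\zeta_n,w_n)^{\Delta_n}$ obtained by descent. Since each $e_{n,i}$ lies in $H_n$ and each $w_n^k$ lies in $\mathbb{Q}(w_n)$, it suffices to evaluate $e_{n,i}(w_n^k)$ upstairs in the big field $\mathbb{Q}(\zeta_n,w_n)$ and to observe that the result is again $\Delta_n$-fixed, hence a genuine element of $\mathbb{Q}(w_n)$.

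First I would record the basic Galois action: since $\sg_n\in N_n=Gal(\mathbb{Q}(\zeta_n,w_n)/\mathbb{Q}(\zeta_n))$ fixes $\zeta_n$ and satisfies $\sg_n(w_n)=\zeta_n w_n$, an immediate induction gives $\sg_n^j(w_n)=\zeta_n^j w_n$, and therefore $\sg_n^j(w_n^k)=\zeta_n^{jk}w_n^k$ for all $j,k$. Next I would substitute the definition $e_{n,i}=\frac{1}{p^n}\sum_{j=0}^{p^n-1}\zeta_n^{-ij}\sg_n^j$ and apply it to $w_n^k$, obtaining
$$e_{n,i}(w_n^k)=\frac{1}{p^n}\sum_{j=0}^{p^n-1}\zeta_n^{-ij}\zeta_n^{jk}w_n^k=\Big(\frac{1}{p^n}\sum_{j=0}^{p^n-1}\zeta_n^{j(k-i)}\Big)w_n^k.$$
The inner sum is exactly the orthogonality (geometric-sum) relation already exploited in the proof of Proposition \ref{Hbasis}: it equals $1$ when $k\equiv i\pmod{p^n}$ and $0$ otherwise. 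Within the index ranges of the statement this congruence collapses to ordinary equality $i=k$, so the scalar is the Kronecker $\delta_{ik}$ and $e_{n,i}(w_n^k)=\delta_{ik}w_n^k$.

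The computation is entirely routine, so there is no serious obstacle; the only points needing care are (i) confirming that the descended action is literally the restriction of the group-ring action, which is what licenses computing the formula in $\mathbb{Q}(\zeta_n,w_n)$ and reading off that the answer $\delta_{ik}w_n^k$ indeed lands in $\mathbb{Q}(w_n)$, and (ii) being explicit that the relevant ``$\delta_{ik}$'' arises as a congruence mod $p^n$ that reduces to equality on the stated ranges. It is worth emphasizing the parallel with Proposition \ref{Hbasis}: under the identification $e_{n,i}=\widehat{e}_{n,i}$, the relation $\widehat{e}_{n,i}(\sg_n^k)=\delta_{ik}$ in $(\mathbb{Q} N_n)^*$ is mirrored by $e_{n,i}(w_n^k)=\delta_{ik}w_n^k$, so each $e_{n,i}$ acts on the power basis $\{w_n^k\}$ of $\mathbb{Q}(w_n)$ precisely as the projection onto its $w_n^i$-component.
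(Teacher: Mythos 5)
Your proof is correct and follows essentially the same route as the paper: substitute the definition of $e_{n,i}$, use $\sg_n^j(w_n^k)=\zeta_n^{jk}w_n^k$, and collapse the resulting geometric sum $\frac{1}{p^n}\sum_j \zeta_n^{j(k-i)}$ to $\delta_{ik}$. The extra remarks you make about descent licensing the computation upstairs and about the congruence mod $p^n$ reducing to equality on the stated ranges are sound but implicit in the paper's argument.
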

\begin{proof}
A basis for $\mathbb{Q}(w_{n})$ over $\mathbb{Q}$ consists of powers $w_{n}^k$ for $k$ from $0$ to $p^n-1$. Using the $e_{n,i}$ given earlier, where $i=0,\dots,p^{n}-1$, direct calculation yields
\begin{align*}
e_{n,i}(w_{n}^k) &= \frac{1}{p^n}\sum_{j=0}^{p^n-1}\zeta_{n}^{-ij}\sg_n^{j}(w_{n}^k) \\ 
         &= \frac{1}{p^n}\sum_{j=0}^{p^n-1}\zeta_{n}^{-ij}(\zeta_{n}^{kj}w_{n}^{k}) \\
         &= w_{n}^k\left[\frac{1}{p^n}\sum_{j=0}^{p^n-1}\zeta_{n}^{-ij+kj}\right] \\
         &= w_{n}^k\left[\frac{1}{p^n}\sum_{j=0}^{p^n-1}\zeta_{n}^{j(k-i)}\right] \\
         &= \delta_{ik}w_{n}^k\\
\end{align*}
\end{proof}
\noindent As such, the $e_{n,i}$ are almost a 'dual basis' to $\{1,w_{n},\dots,w_{n}^{p^n-1}\}$.\par
\noindent Now we know that $H_n$ will be a form of $\mathbb{Q}[N_n]$ in that $\mathbb{Q}(\zeta_n)\otimes H_n\cong \mathbb{Q}(\zeta_n)[N_n]$ but it will also be important in the sequel to have some insight into the structure of $\mathbb{Q}(\zeta_m)\otimes H_n$ for different $m$. We have the following:
\begin{lemma}
\label{QmtensorHn}
Given $H_n$ as defined above, if $m\geq n$ then $\mathbb{Q}(\zeta_m)\otimes H_n\cong \mathbb{Q}(\zeta_m)[N_n]$ and if $m<n$ then $\mathbb{Q}(\zeta_m)\otimes H_n$ contains $\sigma_n^{p^{n-m}}$.
\end{lemma}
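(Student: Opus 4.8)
The plan is to treat the two cases separately: the first is a formal base-change argument, and the second is a concrete computation in the idempotent basis constructed in Proposition~\ref{Hbasis}.

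For $m\geq n$ I would exploit the fact, recorded above, that $H_n$ is a $\mathbb{Q}(\zeta_n)$-form of $\mathbb{Q}N_n$, i.e.\ $\mathbb{Q}(\zeta_n)\otimes_{\mathbb{Q}} H_n\cong \mathbb{Q}(\zeta_n)[N_n]$. Since $p^n\mid p^m$ we have the tower $\mathbb{Q}\subseteq\mathbb{Q}(\zeta_n)\subseteq\mathbb{Q}(\zeta_m)$, so transitivity of the tensor product gives
\begin{align*}
\mathbb{Q}(\zeta_m)\otimes_{\mathbb{Q}} H_n
&\cong \mathbb{Q}(\zeta_m)\otimes_{\mathbb{Q}(\zeta_n)}\bigl(\mathbb{Q}(\zeta_n)\otimes_{\mathbb{Q}} H_n\bigr) \\
&\cong \mathbb{Q}(\zeta_m)\otimes_{\mathbb{Q}(\zeta_n)}\mathbb{Q}(\zeta_n)[N_n] \\
&\cong \mathbb{Q}(\zeta_m)[N_n],
\end{align*}
the last step being extension of scalars of a group ring along a field extension. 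This half is essentially bookkeeping.

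For $m<n$ I would work inside the single ring $\mathbb{Q}(\zeta_n)\otimes_{\mathbb{Q}} H_n\cong\mathbb{Q}(\zeta_n)[N_n]$ and realize $\mathbb{Q}(\zeta_m)\otimes_{\mathbb{Q}} H_n$ as the $\mathbb{Q}(\zeta_m)$-span of the idempotents $e_{n,i}$, using the inclusion $\mathbb{Q}(\zeta_m)\subseteq\mathbb{Q}(\zeta_n)$. The crucial input is the Fourier inversion of the basis of Proposition~\ref{Hbasis}: from $e_{n,i}=\frac{1}{p^n}\sum_{j}\zeta_n^{-ij}\sigma_n^{j}$ one recovers $\sigma_n^{j}=\sum_{i=0}^{p^n-1}\zeta_n^{ij}e_{n,i}$. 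Taking $j=p^{n-m}$ and observing that $\zeta_n^{p^{n-m}}$ is a primitive $p^m$-th root of unity, hence lies in $\mathbb{Q}(\zeta_m)$, all the coefficients $\zeta_n^{ip^{n-m}}=(\zeta_n^{p^{n-m}})^{i}$ of $\sigma_n^{p^{n-m}}$ lie in $\mathbb{Q}(\zeta_m)$, so $\sigma_n^{p^{n-m}}\in\mathbb{Q}(\zeta_m)\otimes_{\mathbb{Q}} H_n$.

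The main obstacle is not the algebra but fixing the right reading of \emph{contains}: one must pin down the embedding $\mathbb{Q}(\zeta_m)\otimes_{\mathbb{Q}} H_n\hookrightarrow\mathbb{Q}(\zeta_n)[N_n]$ coming from $\mathbb{Q}(\zeta_m)\subseteq\mathbb{Q}(\zeta_n)$ and verify that $\sigma_n^{p^{n-m}}$ lands in the image. The same coefficient count in fact shows $\sigma_n^{j}\in\mathbb{Q}(\zeta_m)\otimes H_n$ precisely when $p^{n-m}\mid j$, so the group elements that survive are exactly the cyclic subgroup $\langle\sigma_n^{p^{n-m}}\rangle$ of order $p^m$; I would flag this refinement, since it is what the profinite construction in the sequel will need.
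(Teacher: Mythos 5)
Your proof is correct and follows essentially the same route as the paper: the $m\geq n$ case is the identical base-change through the tower $\mathbb{Q}\subseteq\mathbb{Q}(\zeta_n)\subseteq\mathbb{Q}(\zeta_m)$, and for $m<n$ the element you produce by Fourier inversion, $\sum_{i}\zeta_n^{ip^{n-m}}e_{n,i}$, is exactly the $\mathbb{Q}(\zeta_m)$-linear combination (the paper's choice $a_i=i$) that the paper expands coefficient-by-coefficient, via the same root-of-unity sums, to obtain $\sigma_n^{p^{n-m}}$. Your closing refinement that $\sigma_n^{j}$ lies in $\mathbb{Q}(\zeta_m)\otimes H_n$ precisely when $p^{n-m}\mid j$ likewise matches the paper's concluding remark that $\langle\sigma_n^{p^{n-m}}\rangle$ is the smallest subgroup of $N_n$ contained in this partial base extension.
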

\begin{proof}
For $m\geq n$ we have that $\mathbb{Q}(\zeta_n) \subseteq \mathbb{Q}(\zeta_m)$ so that $\mathbb{Q}(\zeta_m) \otimes H_n=\mathbb{Q}(\zeta_m)\otimes (\mathbb{Q}(\zeta_n)\otimes H_n)=\mathbb{Q}(\zeta_m)\otimes (\mathbb{Q}(\zeta_n)[N_n])=\mathbb{Q}(\zeta_m)[N_n]$.\par
If $m<n$ then $\mathbb{Q}(\zeta_m)\otimes H_n$ will not be the full group-ring since $\mathbb{Q}(\zeta_m)$ doesn't contain $p^{n-th}$ roots of unity, so in particular it will not contain $\sigma_n$. However, using the basis $\{e_{n,i}\}$ for $H_n$ together with the fact that $\zeta_m=(\zeta_n)^{p^{n-m}}$ we can show that this partial base extension of $H_n$ contains $\sigma_n^{p^{n-m}}$. Consider the following $\mathbb{Q}(\zeta_m)$-linear combination of the $e_{n,i}$
\begin{align*}
\sum_{i=0}^{p^n-1}\zeta_n^{a_ip^{n-m}}e_{n,i}&=\sum_{i=0}^{p^n-1}\zeta_n^{a_ip^{n-m}}\left[\frac{1}{p^n}\sum_{j=0}^{p^n-1}\zeta_n^{-ij}\sigma_n^j\right]\\
&=\sum_{j=0}^{p^n-1}\left[\frac{1}{p^n}\sum_{i=0}^{p^n-1}\zeta_n^{a_ip^{n-m}}\zeta_n^{-ij}\right]\sigma_n^j\\
&=\sum_{j=0}^{p^n-1}\left[\frac{1}{p^n}\sum_{i=0}^{p^n-1}\zeta_n^{a_ip^{n-m}-ij}\right]\sigma_n^j\\
\end{align*}
where now the coefficient of $\sigma_n^{p^{n-m}}$ is 
$$
\frac{1}{p^n}\sum_{i=0}^{p^n-1}\zeta_n^{(a_i-i)p^{n-m}}
$$
which, if we choose $a_i=i$ for each $i$ yields $1$. And for $j\neq p^{n-m}$ one has
$$
\frac{1}{p^n}\sum_{i=0}^{p^n-1}\zeta_n^{i(p^{n-m}-j)}
$$
where we may view the $i$ as coming from $\mathbb{Z}_{p^n}$. As such if $p^{n-m}-j=p^ku$ where $gcd(u,p)=1$ then multiplication by $p^ku$ represents an onto homomorphism from $\mathbb{Z}_{p^n}\rightarrow\mathbb{Z}_{p^{n-k}}$ and since 
$$\sum_{t\in\mathbb{Z}_{p^{n-k}}}\zeta_n^{t}=0$$
then 
$$
\frac{1}{p^n}\sum_{i=0}^{p^n-1}\zeta_n^{i(p^{n-m}-j)}=0
$$
for $j\neq p^{n-m}$. Thus, this $\mathbb{Q}(\zeta_m)$-linear combination of the $e_{n,i}$ is exactly $\sigma^{p^{n-m}}$. That $\mathbb{Q}(\zeta_m)\otimes H_n$ contains the unique order $p^m$ subgroup of $N_n$ is not a coincidence since one could, from the elements of $\langle\sigma_{n}^{p^{n-m}}\rangle$ and $\mathbb{Q}(\zeta_m)\subseteq \mathbb{Q}(\zeta_n)$ form a collection $\{e_{m,i}'\}$ whose $\mathbb{Q}$-span would be an $H'_m\subseteq H_n$ isomorphic to $H_m$ where therefore $\mathbb{Q}(\zeta_m)\otimes H'_m\cong\mathbb{Q}(\zeta_m)[N_m]$. The point is, that this is the smallest subgroup of $N_n$ which lies in $\mathbb{Q}(\zeta_m)\otimes H_n$.
\end{proof}

\noindent It's also interesting to compare the extensions $\mathbb{Q}(\zeta_{n},w_{n})/\mathbb{Q}(\zeta_{n})$ and $\mathbb{Q}(w_{n})/\mathbb{Q}$. The extension $\mathbb{Q}(\zeta_{n},w_{n})/\mathbb{Q}(\zeta_{n})$ is Galois with respect to the group $N_{n}$ and therefore canonically Hopf-Galois with respect to the Hopf algebra $\mathbb{Q}(\zeta_{n})[N_{n}]$. And we've now demonstrated that the extension $\mathbb{Q}(w_{n})/\mathbb{Q}$ is Hopf-Galois with respect to $H_{n}$ which is isomorphic to $(\mathbb{Q}N_{n})^{*}$. The analogy being made here is to the natural irrationality $Gal(\mathbb{Q}(\zeta_{n},w_{n})/\mathbb{Q}(w_{n}))\cong Gal(\mathbb{Q}(\zeta_{n})/\mathbb{Q})$.\par
It is also worth considering the induced isomorphism (in this case) of 
$$\mathbb{Q}(w_{n})\# H_{n}\cong End_\mathbb{Q}(\mathbb{Q}(w_{n}))$$
which is a consequence of $\mathbb{Q}(w_{n})/\mathbb{Q}$ being Hopf-Galois with respect to $H_{n}$. The underlying algebra of $\mathbb{Q}(w_{n})\# H_{n}$ is $\mathbb{Q}(w_{n})\otimes H_{n}$ but where the multiplication is 'twisted' by the action of $H_{n}$ on $\mathbb{Q}(w_{n})$. Specifically
\begin{align*}
(a \# h)(b \# h')&=\sum_{(h)}a h_{(1)}(b)\# h_{(2)}h'\\
\text{where }\Delta(h)&=\sum_{(h)}h_{(1)}\otimes h_{(2)}\\
\end{align*}
and for $H_{n}$ one has 
$$
\Delta(e_{n,i})=\sum_{\{s,t\ |\ s+t=i\}} e_{n,s}\otimes e_{n,t}
$$
since $H_{n}$ is dual to the group ring and the $e_{n,i}$ are the basis of this dual. Bear in mind also that 
\begin{align*}
dim_{\mathbb{Q}}(End_{\mathbb{Q}}(\mathbb{Q}(w_{n})))&=dim_{\mathbb{Q}}(\mathbb{Q}(w_{n})\otimes H_{n})) \\
                                                 &=[\mathbb{Q}(w_{n}):\mathbb{Q}]\cdot dim_{\mathbb{Q}}(H_{n})\\
                                                 &=p^{2n}
\end{align*}
where $H_{n}$ is embedded as the span of the $\{e_{n,i}\}$ given above in \ref{Hact} and $\mathbb{Q}(w_{n})$ is embedded as those linear transformations induced by left multiplication by the basis elements $\{1,w_n,\dots,w_{n}^{p^n-1}\}$. As such $\{w_{n}^j\# e_{n,i}\}$ is a basis for $\mathbb{Q}(w_{n})\# H_{n}$, where we may also view $End_{\mathbb{Q}}(\mathbb{Q}(w_{n}))$ as being spanned by these elements. Specifically we have 
\begin{equation}
\label{endact}
(w_{n}^j\# e_{n,i})(w_{n}^k)=\begin{cases} 0 \text{\ \ \ \ \ \ if $i\neq k$}\\ w_{n}^{j+k}\text{ if $i=k$}\end{cases}
\end{equation}
which yields the multiplication explicitly, in accordance with the formula above:
\begin{align*}
(w_{n}^j\# e_{n,i})(w_{n}^k\# e_{n,l})&=\sum_{\{s,t\ s+t=l\}}w_{n}^j e_{n,s}(w_{n}^k)\#e_{n,t}e_{n,l}\\
                              &=\sum_{\{s,t\ s+t=l\}}w_{n}^j (\delta_{s,k}w_{n}^k)\#\delta_{t,l}e_{n,t}\\
                              &=\begin{cases} w_{n}^{j+k}\# e_{n,l}\text{ if $k+l=i$}\\ 0\text{\ \ \ \ \ \ \ \ \ \  otherwise}\end{cases}
\end{align*}
As an interesting computational sideline, there is a nice way to associate the actions of $e_{n,i}$ and $w_{n}^j$ within $End_{\mathbb{Q}}(\mathbb{Q}(w_n))$ as matrices and the $w_{n}^j\# e_{n,i}$ as products of these matrices. We demonstrate this explicitly in the case $p=3$ and $n=1$.\par
Viewing $\{1,w,w^2\}$ as the basis for $\mathbb{Q}(w)$, each $e_{i}$ can be represented as a $3\times 3$ matrix which is zero except for the ${i+1}^{st}$ column which consists of the ${i+1}^{st}$ elementary basis vector for $V=\mathbb{Q}^3$. i.e. We're making the identification $End_{\mathbb{Q}}(\mathbb{Q}(w))\cong End_{\mathbb{Q}}(V)\cong GL_3(\mathbb{Q})$. Similarly, we view $w^i$ as left multiplication $l_{w^i}$ for $i=0,1,2$ which act to cyclically rotate the basis vectors $\{1,w,w^2\}$. This yields the following $6$ matrices:
\begin{align*}
l_{1}&=\begin{bmatrix}1 & 0&0 \\0 & 1 &0 \\0&0&1\end{bmatrix} &\ \ \ \ \  e_{0}&=\begin{bmatrix}1 & 0&0 \\0 & 0 &0 \\0&0&0\end{bmatrix}\\
l_{w}&=\begin{bmatrix}0& 0&a \\1 & 0 &0 \\0&1&0\end{bmatrix} &\ \ \ \ \   e_{1}&=\begin{bmatrix}0 & 0&0 \\0 & 1 &0 \\0&0&0\end{bmatrix}\\
l_{w^2}&=\begin{bmatrix}0 & a&0 \\0 & 0 &a \\1&0&0\end{bmatrix} &\ \ \ \ \   e_{2}&=\begin{bmatrix}0 & 0&0 \\0 & 0 &0 \\0&0&1\end{bmatrix} \\
\end{align*}
which when multiplied in pairs $\{l_{w^j}e_{i}\}$ yield nine matrices
\begin{align*}
\bigg\{&\begin{bmatrix}1 & 0&0 \\0 & 0 &0 \\0&0&0\end{bmatrix},\begin{bmatrix}0 & 0&0 \\0 & 1 &0 \\0&0&0\end{bmatrix},\begin{bmatrix}0 & 0&0 \\0 & 0 &0 \\0&0&1\end{bmatrix},\\
&\begin{bmatrix}0& 0&0 \\1 & 0 &0 \\0&0&0\end{bmatrix},\begin{bmatrix}0& 0&0 \\0 & 0 &0 \\0&1&0\end{bmatrix}, \begin{bmatrix}0& 0&a \\0 & 0 &0 \\0&0&0\end{bmatrix},\\
&\begin{bmatrix}0 & 0&0 \\0 & 0 &0 \\1&0&0\end{bmatrix},\begin{bmatrix}0 & a&0 \\0 & 0 &0 \\0&0&0\end{bmatrix},\begin{bmatrix}0 & 0&0 \\0 & 0 &a \\0&0&0\end{bmatrix},\bigg\}  \\
\end{align*}
corresponding to the $\{w_n^j\# e_i\}$. Note also that this set is clearly a basis for the endomorphism ring since $\sum_{j,i} c_{j,i} l_{w^j}e_i$ equals
$$
\begin{bmatrix} c_{0,0} & c_{2,1}a & c_{1,2}a\\c_{1,0} & c_{0,1} & c_{2,2}a\\c_{2,0} & c_{1,1} & c_{0,2}\end{bmatrix}
$$
which, given that $a\in\mathbb{Q}$, gives every $3\times 3$ matrix over $\mathbb{Q}$ for unique choices of $\{c_{j,i}\}$. One sees the same motif for larger $p$ and $n$, namely a $p^n\times p^n$ matrix where every entry above the main diagonal is multiplied by $a$.
\section{Profinite Forms}
In this section we shall construct a profinite Hopf algebra form that satisfies a generalization 
of the following:
\begin{theorem}{\cite[Theorem 5]{HP}}
Let $G$ be a finitely generated group with finite automorphism group $F=Aut(G)$. Then there is a bijection
between {\bf Gal}$(k,F)$ (extensions of $k$ with Galois group $F$) and {\bf Hopf}$(k[G])$ (Hopf algebra forms of $k[G]$) which associates with each $F$-Galois extension $K$ of $k$ the Hopf
algebra
$$
H = \{ \sum c_g g \in KG | \sum f(c_g) f(g) = \sum c_g g \ for \ all \ f\in F \}
$$
Furthermore, $H$ is a $K$-form of $k[G]$ by the isomorphism
$$
\omega : H\otimes K \cong KG , \ \omega (h\otimes a)=ah
$$ 
\end{theorem}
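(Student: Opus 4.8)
The plan is to prove this by Galois descent, reducing the classification of Hopf algebra forms to a computation of the relevant automorphism group. The first and most essential step is to identify the $K$-Hopf algebra automorphisms of $K[G]$. I would begin by recalling that for a group algebra over any field $L$ the grouplike elements of $L[G]$ are exactly the elements of $G$: writing a grouplike $x=\sum c_g g$ and comparing $\Delta(x)=\sum c_g\, g\otimes g$ with $x\otimes x=\sum_{g,h} c_g c_h\, g\otimes h$ forces each $c_g\in\{0,1\}$ with at most one nonzero, whence $x\in G$. Since any $L$-Hopf algebra automorphism permutes the grouplikes and is $L$-linear on the basis $G$, it is induced by a unique group automorphism of $G$; thus $Aut_{L\text{-Hopf}}(L[G])\cong Aut(G)=F$ functorially in $L$. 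Because $F$ is finite, this exhibits the automorphism functor of $k[G]$ as the finite constant group $\underline{F}$, and crucially these induced automorphisms are defined over $k$ (they only permute $G$), so $\Gamma=Gal(K/k)$ acts trivially on $F$.

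The second step is the descent dictionary. For a Galois extension $K/k$ with group $\Gamma$, standard Galois descent for the $K$-Hopf algebra $K[G]$ says that the isomorphism classes of $k$-Hopf algebras $H$ with $H\otimes_k K\cong K[G]$ split by $K$ are in bijection with the pointed set $H^1(\Gamma, Aut_{K\text{-Hopf}}(K[G]))=H^1(\Gamma,F)$. Passing to the absolute Galois group $\Gamma_k$, the full set $\mathbf{Hopf}(k[G])$ of forms is $H^1(\Gamma_k,F)$ with trivial action. On the other side, the Galois extensions of $k$ with group $F$ (in the Chase--Harrison--Rosenberg sense, i.e. $F$-torsors, not necessarily connected) are likewise classified by $H^1(\Gamma_k,F)$. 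The asserted bijection is the composite of these two classifications; the content of the theorem is that it admits the explicit fixed-ring description given.

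The third step is to make the correspondence concrete and verify the formulas. Given an $F$-Galois extension $K/k$, the tautological descent datum is the diagonal $\Gamma$-action on $K[G]$ combining the Galois action on coefficients with the $F=Aut(G)$-action on $G$; its fixed ring is precisely $H=\{\sum c_g g\in KG \mid \sum f(c_g)f(g)=\sum c_g g \text{ for all } f\in F\}$. I would check that $H$ is a $k$-sub-Hopf-algebra (the diagonal action is by Hopf automorphisms, so $\Delta$, $\epsilon$, and the antipode restrict) and that $\omega(h\otimes a)=ah$ is the canonical descent isomorphism $H\otimes_k K\xrightarrow{\sim}K[G]$ of $K$-Hopf algebras; the latter is the standard statement that for a semilinear action the multiplication map from $(\text{fixed ring})\otimes K$ is an isomorphism, here refined to respect the coalgebra structure.

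The main obstacle is making the bijection watertight rather than the individual descent computations. One must pin down exactly which notion of ``$F$-Galois extension'' is meant so that the classifying sets match: restricting to connected (field) extensions corresponds only to surjective cocycles $\Gamma_k\twoheadrightarrow F$, whereas the full form set $H^1(\Gamma_k,F)$ requires allowing the non-connected $F$-torsors, which decompose as extensions induced from proper subgroups of $F$. Verifying that the fixed-ring construction is inverse to the descent construction on the nose---that recovering the cocycle from $H$ returns the original extension and vice versa---is where care is needed, and where the hypotheses that $G$ be finitely generated with $F$ finite are used, to guarantee that the automorphism functor is the finite constant group $F$ so that ordinary Galois cohomology (rather than fppf descent) suffices.
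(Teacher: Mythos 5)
The paper does not prove this statement at all: it is Theorem 5 of Haggenm{\"u}ller--Pareigis, quoted verbatim from \cite{HP} as imported background, so there is no internal proof to compare your attempt against. Measured against the original source, your outline is essentially their argument: compute $\mathrm{Aut}_{K\text{-Hopf}}(K[G])\cong \mathrm{Aut}(G)=F$ via grouplike elements, observe that these automorphisms are defined over $k$ so the Galois action on them is trivial, classify forms and $F$-Galois extensions by the same cohomology set $H^1(\Gamma_k,F)=\mathrm{Hom}_{\mathrm{cont}}(\Gamma_k,F)/\mathrm{conj}$, and realize the explicit bijection by taking the fixed ring of the diagonal semilinear action, with $\omega$ the canonical descent isomorphism. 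Your sketch has that architecture right, including the grouplike computation and the triviality of the Galois action, and you correctly flag the two places where the substantive work of \cite{HP} actually lies: (i) showing that \emph{every} Hopf form of $k[G]$ --- a priori split only by some faithfully flat extension --- is split by a Galois extension, which is precisely where the hypotheses that $G$ is finitely generated and $F=\mathrm{Aut}(G)$ is finite are used (over non-connected rings the grouplikes of $R[G]$ are larger than $G$, so your field computation alone does not give the automorphism \emph{functor}); and (ii) taking ``Galois extension'' in the Chase--Harrison--Rosenberg sense, allowing non-connected algebras such as $\prod_F k$, which corresponds to the split form $k[G]$ itself --- with field extensions only, the bijection fails. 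Neither point is carried out in your proposal, but both are correctly identified as the load-bearing steps, and neither invalidates the approach; filled in, this is the proof of the cited source.
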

By construction, all the $H_{n}$ are $\mathbb{Q}$-Hopf algebras which are $Q(\zeta_{n})$-forms of the group rings $\mathbb{Q}[N_{n}]$ and are examples of the above theorem in action. The reason for this is that $\Delta_{n}$ is isomorphic to the automorphism group of the cyclic group $N_{n}$ as well as to $Gal(\mathbb{Q}(\zeta_{n})/\mathbb{Q})$. What we would like to do now is to consider a profinite version of the above result. The usage of the term profinite is motivated by looking at the construction of the Galois group of a direct limit (union) of field extensions. In particular, for a base field $F$, if $L=\underset{\rightarrow}{lim} K$ where $K$ is a chain of sub-fields of $L$ containing $F$, then if each $K$ is a Galois extension of $F$ then $Gal(L/F)=\underset{\leftarrow}{lim}Gal(K/F)$ the inverse limit of the Galois groups of each of the $K/F$.\par
Here we shall consider the fields $\mathbb{Q}(w_{n})$ where each $w_{n}$ is chosen to be a $p^n$-th root of a fixed $a\in\mathbb{Q}$ which is not already a $p^n$-th root of a rational for any $n$. Even though these are not normal extensions of $\mathbb{Q}$, by what we have already shown each {\it is} Hopf-Galois over $\mathbb{Q}$ with respect to the Hopf algebras $H_{n}$. As such, we will start with an inverse system using the $H_n$. The resulting Hopf algebra will be a form of a topologically finitely generated group whose automorphism group is not finite, but which satisfies the above theorem. That the automorphism group is infinite contrasts with the setup in \cite{HP}.\par
One issue to be dealt with first is that, while the $H_n$ are all $\mathbb{Q}$-Hopf algebras, the group rings $\mathbb{Q}(\zeta_n)[N_n]$ (which contain each $H_n$) are $\mathbb{Q}(\zeta_n)$-Hopf algebras for each $n$. As such, one cannot start with a directed system involving these group rings, and then descend since these all lie in distinct categories of Hopf algebras, one for each ground field $\mathbb{Q}(\zeta_n)$.\par
As each $H_n$ is $\mathbb{Q}(\zeta_n)$-form of $\mathbb{Q}[N_n]$ then one may base change all $H_n$ up to $\mathbb{Q}(\zeta_{\infty})$ to yield group rings (and $\mathbb{Q}(\zeta_{\infty})$-Hopf algebras) $\mathbb{Q}(\zeta_{\infty})[N_n]$, as diagrammed below.
$$\diagram
\mathbb{Q}(\zeta_{\infty})[N_1]\ddotted  & \mathbb{Q}(\zeta_{\infty})[N_2]\lto\ddotted       &   \mathbb{Q}(\zeta_{\infty})[N_3]\lto \ddotted  &  \dots\lto & \mathbb{Q}(\zeta_{\infty})[N_n]\lto\ddotted & \lto\dots \\ 
& & & & &\\
                              &                                       & \mathbb{Q}(\zeta_{3})[N_3]\dlline\dline\udotted\urdotted & \\
                              & \mathbb{Q}(\zeta_{2})[N_2]\dlline\ddline\udotted & \ddline \\
\mathbb{Q}(\zeta_{1})[N_1]\dline\udotted & \\
                      H_{1}   &     H_{2}        \lto                  & H_{3}\lto                             & \dots\lto & H_{n}\lto                        & \lto\dots \\ 
\enddiagram$$
As such, we will define a pair of inverse systems of Hopf algebras over $\mathbb{Q}$ and $\mathbb{Q}(\zeta_{\infty})$ which will be related by descent.\par
Define $\nu_{j,i}:\mathbb{Q}(\zeta_{\infty})[N_{j}]\longrightarrow\mathbb{Q}(\zeta_{\infty})[N_{i}] \text{ for } j\geq i$ as follows:
\begin{align*}
\nu_{j,i}(q) &= q \ for\  q\in\mathbb{Q}(\zeta_{\infty}) \\
\nu_{j,i}(\sg_{j}) &= \sg_{i} 
\end{align*}
Hence $\nu_{i,i}$ is the identity map on $\mathbb{Q}(\zeta_{\infty})[N_{i}]$ and $\nu_{j,i}\circ\nu_{k,j}=\nu_{k,i} \ for \ k \geq j \geq i$ and we have the following obvious fact.
\begin{lemma}
\label{nu}
$\nu_{j,i}$ is a surjective map of $\mathbb{Q}(\zeta_{\infty})$-Hopf algebras.
\end{lemma}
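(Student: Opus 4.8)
The plan is to recognize that $\nu_{j,i}$ is simply the $\mathbb{Q}(\zeta_{\infty})$-linear extension of the natural quotient homomorphism of cyclic groups, and then to observe that any group-ring map arising this way automatically respects the Hopf structure.

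First I would verify that $\sigma_j\mapsto\sigma_i$ genuinely defines a group homomorphism $N_j\to N_i$. Since $N_j=\langle\sigma_j\rangle$ has order $p^j$ and $N_i=\langle\sigma_i\rangle$ has order $p^i$ with $i\leq j$, we have $p^i\mid p^j$, so the defining relation $\sigma_j^{p^j}=e$ is respected by the image: $\sigma_i^{p^j}=(\sigma_i^{p^i})^{p^{j-i}}=e$. This single divisibility check is the only well-definedness issue, and it is precisely where the hypothesis $j\geq i$ enters. The resulting homomorphism is the standard projection $\mathbb{Z}/p^j\mathbb{Z}\twoheadrightarrow\mathbb{Z}/p^i\mathbb{Z}$, with kernel $\langle\sigma_j^{p^i}\rangle$ of order $p^{j-i}$ and image all of $N_i$, hence surjective as a map of groups.

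Next I would extend this map $\mathbb{Q}(\zeta_{\infty})$-linearly to the group rings. Because it fixes $\mathbb{Q}(\zeta_{\infty})$ pointwise by definition and carries products of group elements to products of their images, it is a homomorphism of $\mathbb{Q}(\zeta_{\infty})$-algebras. Surjectivity is then immediate: each basis element $\sigma_i^{k}$ of $\mathbb{Q}(\zeta_{\infty})[N_i]$ is the image of $\sigma_j^{k}$, and the scalars are their own images, so the image contains a spanning set.

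Finally I would confirm the remaining Hopf-algebra compatibilities. On each group ring the comultiplication, counit, and antipode are determined on the group-like generators by $\Delta(\sigma)=\sigma\otimes\sigma$, $\epsilon(\sigma)=1$, and $S(\sigma)=\sigma^{-1}$. Since $\nu_{j,i}$ is induced by a group homomorphism, one checks directly on the generator that $(\nu_{j,i}\otimes\nu_{j,i})\Delta(\sigma_j)=\sigma_i\otimes\sigma_i=\Delta(\nu_{j,i}(\sigma_j))$, that $\epsilon(\nu_{j,i}(\sigma_j))=1=\epsilon(\sigma_j)$, and that $\nu_{j,i}(S(\sigma_j))=\sigma_i^{-1}=S(\nu_{j,i}(\sigma_j))$; linearity, together with the fact that all three maps act trivially on scalars, propagates these identities to all of $\mathbb{Q}(\zeta_{\infty})[N_j]$. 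As the paper indicates, this is an \emph{obvious fact}, so there is no genuine obstacle here; the only step meriting explicit attention is the divisibility $p^i\mid p^j$ that underlies well-definedness of the underlying quotient homomorphism.
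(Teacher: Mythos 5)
Your proof is correct and follows essentially the same route as the paper's: the paper likewise observes that $\nu_{j,i}$ restricts to a surjective group homomorphism $N_j\to N_i$ and acts as the identity on coefficients, hence is a surjective Hopf algebra morphism. You simply make explicit the details the paper treats as obvious (the divisibility $p^i\mid p^j$ for well-definedness, and the checks of $\Delta$, $\epsilon$, $S$ on the group-like generator), which is a faithful elaboration rather than a different argument.
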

\begin{proof}
The surjectivity is obvious given that $\nu_{j,i}$ is surjective as a group homomorphism from $N_j$ to $N_i$ which, since it acts as the identity on the coefficients, is also seen to be a Hopf algebra morphism between the respective group rings.
\end{proof}
We also need to consider whether the $\nu_{j,i}$ restrict to the $H_n$. As each $H_n\subseteq \mathbb{Q}(\zeta_{n})[N_n]\subseteq \mathbb{Q}(\zeta_{\infty})[N_n]$ is the span of $\{e_{n,i}\}$ given in \ref{Hbasis} then it makes sense to compute $\nu_{n,n-1}(e_{n,i})$.
\begin{lemma}
\label{nuhopf}
For $e_{n,i}$ where $i\in\mathbb{Z}_{p^n}$ as given in \ref{Hbasis}, then 
$$\nu_{n,n-1}(e_{n,i})=\begin{cases} e_{n-1,i/p}\ \ \text{if }i\in p\mathbb{Z}_{p^{n-1}}\subseteq\mathbb{Z}_{p^n} \\ 0\ \ \text{otherwise}\end{cases}$$
\end{lemma}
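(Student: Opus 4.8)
The plan is to apply $\nu_{n,n-1}$ directly to the closed form $e_{n,i}=\frac{1}{p^n}\sum_{j=0}^{p^n-1}\zeta_n^{-ij}\sigma_n^j$ from Proposition \ref{Hbasis}. Since $\nu_{n,n-1}$ fixes every element of $\mathbb{Q}(\zeta_\infty)$ and carries $\sigma_n^j$ to $\sigma_{n-1}^j$, linearity gives $\nu_{n,n-1}(e_{n,i})=\frac{1}{p^n}\sum_{j=0}^{p^n-1}\zeta_n^{-ij}\sigma_{n-1}^j$. The essential point is that $\sigma_{n-1}$ has order $p^{n-1}$ rather than $p^n$, so the group element $\sigma_{n-1}^j$ depends only on $j$ modulo $p^{n-1}$, and I would exploit this by re-indexing the sum.

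Concretely, I would write $j=qp^{n-1}+r$ with $0\le q<p$ and $0\le r<p^{n-1}$, so that $\sigma_{n-1}^j=\sigma_{n-1}^r$, and regroup as $\frac{1}{p^n}\sum_{r=0}^{p^{n-1}-1}\big(\sum_{q=0}^{p-1}\zeta_n^{-i(qp^{n-1}+r)}\big)\sigma_{n-1}^r$. Factoring $\zeta_n^{-ir}$ out of the inner sum leaves $\sum_{q=0}^{p-1}(\zeta_n^{p^{n-1}})^{-iq}$. Since $\zeta_n$ is a primitive $p^n$-th root of unity, $\zeta_n^{p^{n-1}}$ is a primitive $p$-th root of unity, and this is exactly the orthogonality sum already used in Propositions \ref{Hbasis} and \ref{Hact}: it equals $p$ when $p\mid i$ and $0$ otherwise. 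This immediately yields the ``$0$ otherwise'' branch of the claim.

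For the surviving branch, I would write $i=pi'$ with $i'\in\mathbb{Z}_{p^{n-1}}$. The factored coefficient becomes $p\,\zeta_n^{-ir}=p\,\zeta_n^{-pi'r}$, and using the compatible choice $\zeta_{n-1}=\zeta_n^{p}$ this equals $p\,\zeta_{n-1}^{-i'r}$. Substituting back and absorbing the factor $p$ into $\tfrac{1}{p^n}$ gives $\frac{1}{p^{n-1}}\sum_{r=0}^{p^{n-1}-1}\zeta_{n-1}^{-i'r}\sigma_{n-1}^r$, which is precisely the defining expression for $e_{n-1,i'}=e_{n-1,i/p}$. The computation is entirely routine orthogonality of characters; the only step needing genuine care is the index bookkeeping modulo $p^{n-1}$ together with keeping the choices of primitive roots compatible (so that $\zeta_n^{p^{n-1}}$ is a primitive $p$-th root while $\zeta_n^{p}=\zeta_{n-1}$), and this is the one point I would spell out explicitly rather than leave implicit.
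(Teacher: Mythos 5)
Your proof is correct and follows essentially the same route as the paper's: apply $\nu_{n,n-1}$ to the defining sum for $e_{n,i}$, split the index as $j=qp^{n-1}+r$ using the fact that $\sigma_{n-1}^j$ depends only on $j$ modulo $p^{n-1}$, and invoke orthogonality of the $p$-th roots of unity $\zeta_n^{p^{n-1}}$. The only cosmetic difference is that you run this decomposition uniformly for both cases, whereas the paper handles $p\mid i$ by first rewriting $\zeta_n^{-ij}$ as $\zeta_{n-1}^{-(i/p)j}$ and collapsing the sum by periodicity, reserving the $j=ap^{n-1}+b$ split for the case $p\nmid i$.
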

\begin{proof}
We have 
$$e_{n,i}=\frac{1}{p^n}\sum_{j=0}^{p^n-1}\zeta_n^{-ij}\sg_n^{j}$$
so that, if $i\in p\mathbb{Z}_{p^{n-1}}$ then
\begin{align*}
\nu_{n,n-1}(e_{n,i})&=\frac{1}{p^n}\sum_{j=0}^{p^n-1}\zeta_n^{-ij}\sg_{n-1}^{j}\\
                  &=\frac{1}{p^n}\sum_{j=0}^{p^n-1}\zeta_{n-1}^{-\frac{i}{p}j}\sg_{n-1}^{j}\\
                  &=\frac{p}{p^n}\left(\sum_{j=0}^{p^{n-1}-1}\zeta_{n-1}^{-\frac{i}{p}j}\sg_{n-1}^{j}\right)\\
                  &=e_{n-1,i/p}\\
\end{align*}
where the passage from $\zeta_n$ to $\zeta_{n-1}$ is due to the fact that $i$ is a multiple of $p$. Since each $j\in\mathbb{Z}_{p^n}$ can be written as $ap^{n-1}+b$ where $a\in\mathbb{Z}_{p}$ and $b\in\mathbb{Z}_{p^{n-1}}$, then if $i$ is not a multiple of $p$ we have
\begin{align*}
\nu_{n,n-1}(e_{n,i})&=\frac{1}{p^n}\sum_{j=0}^{p^n-1}\zeta_n^{-ij}\sg_{n-1}^{j}\\
                  &=\frac{1}{p^n}\sum_{b=0}^{p^{n-1}-1}  \sum_{a=0}^{p-1}\zeta_n^{-i(ap^{n-1}+b)}\sg_{n-1}^{ap^{n-1}+b}\\
                  &=\frac{1}{p^n}\sum_{b=0}^{p^{n-1}-1} \sum_{a=0}^{p-1}\zeta_n^{-i(ap^{n-1}+b)}\sg_{n-1}^{b}\\
                  &=\frac{1}{p^n}\sum_{b=0}^{p^{n-1}-1}\zeta_n^{-ib}\left( \sum_{a=0}^{p-1}\zeta_n^{-i(ap^{n-1})}\right)\sg_{n-1}^{b}\\
                  &=\frac{1}{p^n}\sum_{b=0}^{p^{n-1}-1}\zeta_n^{-ib}\left( \sum_{a=0}^{p-1}\zeta_1^{-ia}\right)\sg_{n-1}^{b}\\
                  &=\frac{1}{p^n}\sum_{b=0}^{p^{n-1}-1}\zeta_n^{-ib}\left( 0\right)\sg_{n-1}^{b}\\
                  &=0.\\
\end{align*}
\end{proof}
It is interesting to note that, $\nu_{n,n-1}:H_n\rightarrow H_{n-1}$ where $H_n=(\mathbb{Q}N_n)^{*}$ and $H_{n-1}=(\mathbb{Q}N_{n-1})^{*}$ can be viewed as the dual of the natural map $\alpha_{n-1,n}:\mathbb{Q}N_{n-1}\rightarrow \mathbb{Q}N_n$ given by $\alpha_{n-1,n}(\sigma_{n-1})=\sigma_{n}^p$ since then $\alpha_{n-1,n}^{*}$ would be defined by $\alpha_{n-1,n}^{*}(e_{n,i})(\sigma_{n-1}^j)=e_{n,i}(\sigma_{n}^{pj})=\delta_{i,pj}$. As such $\alpha_{n-1,n}^{*}(e_{n,i})=0$ if $i$ is not a multiple of $p$, and if $i$ were a multiple of $p$ then $\alpha_{n-1,n}^{*}(e_{n,i})=e_{n-1,i/p}$ which is exactly what we get with $\nu_{n,n-1}$.\par
The $H_n$ are constructed as $(\mathbb{Q}(\zeta_n)[N_n])^{\Delta_n}$ where $\Delta_n$ acts diagonally on the scalars and group elements by virtue of it being $Gal(\mathbb{Q}(\zeta_n)/\mathbb{Q})$ and isomorphic to $Aut(N_n)$. In a related way we will consider the action of $Gal(\mathbb{Q}(\zeta_{\infty})/\mathbb{Q})$ on each $\mathbb{Q}(\zeta_{\infty})[N_n]$.
\noindent Define $\phi_{j,i}$ : $\Delta_{j}\longrightarrow\Delta_{i} \ (j\geq i)$ by $\phi_{j,i}(\dl_{j})=\dl_{i}$.
It is easy to verify that $\{\Delta_{i},\phi_{j,i}\}$ is also an inverse system and we shall define
$$\DI = \underleftarrow{lim}\Delta_{i}$$
which, amongst other things, is the Galois group of the profinite extension $\mathbb{Q}(\zeta_{\infty})/\mathbb{Q}$. Furthermore, if we restrict $\nu_{j,i}$ to the $N_{j}$ then it is clear that
$\{ N_{j},\nu_{j,i} \}$ is an inverse system and we shall define $\NI=\underleftarrow{lim}N_{j}$. Each $N_{j}$ is cyclic of order $p^j$ and $\DI$ is also the inverse limit of the automorphism groups of each $N_{j}$. Since a given primitive root $\pi$ mod $p$ is also a primitive root mod $p^n$ then we can choose the Galois group of $\mathbb{Q}(\zeta_{j})$ to be generated by an element which acts to a raise $\zeta_{j}$ to $\pi$ for all $j$. Similarly, each automorphism group is generated by an element which acts to raise $\sigma_n$ to the same power as well. We have the following which is known, for example \cite[p.656]{LF}, but we present here for use later.
\begin{proposition}
\begin{align*}
\NI&\cong\{\{\sg_{j}^{a_j}\}\in\Pi_{j=1}^{\infty} N_{j}\ |\ \nu_{j,i}(\sg_{j}^{a_j})=\sg_{i}^{a_i}\ (mod\ p^i)  )\} \\
   &\cong\{\{\sg_{j}^{a_j}\}\in\Pi_{j=1}^{\infty} N_{j}\ |\ a_j\equiv a_i\ (mod\ p^i)  )\} \\
   &\cong J_p\text{ the p-adic integers}\\
\DI&\cong\{\{\dl_{i}^{e_{i}}\}\in\Pi_{j=1}\Delta_{j}\ |\ \phi_{j,i}(\dl_{j}^{e_{j}})=\dl_{i}^{e_{i}}\}\\
   &\cong\{\{\dl_{i}^{e_{i}}\}\in\Pi_{j=1}\Delta_{j}\ |\ e_j\equiv e_i\ (mod\ p^i) \}\\
   &\cong (J_p)^{*}\text{ the unit p-adic integers}\\
\end{align*}
\end{proposition}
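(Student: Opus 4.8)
The plan is to unwind the definition of the inverse limit and then match the resulting coherent-sequence descriptions against the standard inverse-limit presentations of $J_p$ and $(J_p)^{*}$. Recall that for any inverse system $\{G_j,\psi_{j,i}\}$ of groups, the limit $\underleftarrow{lim}\,G_j$ is canonically the subgroup of the direct product $\prod_j G_j$ consisting of those sequences $\{g_j\}$ satisfying $\psi_{j,i}(g_j)=g_i$ whenever $j\geq i$. Applying this verbatim to $\{N_j,\nu_{j,i}\}$ and to $\{\Delta_j,\phi_{j,i}\}$ yields the first isomorphism in each of the two displayed blocks with no further work; these are purely formal restatements of the definitions of $\NI=\underleftarrow{lim}\,N_j$ and $\DI=\underleftarrow{lim}\,\Delta_j$.

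For the second isomorphism in the $\NI$ block I would fix, for each $j$, the group isomorphism $N_j=\langle\sg_j\rangle\cong\mathbb{Z}/p^j\mathbb{Z}$ sending $\sg_j^{a}\mapsto a$, under which $\nu_{j,i}$ (which by definition sends $\sg_j\mapsto\sg_i$) becomes the reduction map $\mathbb{Z}/p^j\mathbb{Z}\to\mathbb{Z}/p^i\mathbb{Z}$. The coherence condition $\nu_{j,i}(\sg_j^{a_j})=\sg_i^{a_i}$ then reads $a_j\equiv a_i\ (mod\ p^i)$, and the resulting system is exactly the standard presentation $J_p=\underleftarrow{lim}\,\mathbb{Z}/p^j\mathbb{Z}$, giving the last isomorphism. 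The $\NI$ block is thus essentially bookkeeping once the generators $\sg_j$ are pinned down compatibly by the maps $\nu_{j,i}$.

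The $\DI$ block runs in parallel but requires one genuine verification, which I expect to be the main obstacle. I would transport everything through the isomorphism $t:\Delta_j\to(\mathbb{Z}/p^j\mathbb{Z})^{\times}$ with $t(\dl_j)=\pi$, and the key point is to check that the transition map $\phi_{j,i}$ (defined abstractly by $\dl_j\mapsto\dl_i$) corresponds under $t$ to the natural reduction $(\mathbb{Z}/p^j\mathbb{Z})^{\times}\to(\mathbb{Z}/p^i\mathbb{Z})^{\times}$. This is precisely where the remark that a single primitive root $\pi$ mod $p$ remains a primitive root mod $p^j$ for every $j$ is used: it lets one choose the generators $\dl_j$ coherently so that $t(\dl_j)=\pi$ reduces to $t(\dl_i)=\pi$, identifying $\phi_{j,i}$ with reduction. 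Once this compatibility is in hand, the coherence condition $\phi_{j,i}(\dl_j^{e_j})=\dl_i^{e_i}$ becomes a congruence on the exponents and the system is the standard $\underleftarrow{lim}\,(\mathbb{Z}/p^j\mathbb{Z})^{\times}=(J_p)^{\times}$, yielding $\DI\cong(J_p)^{*}$.

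The only delicate point beyond this compatibility is keeping track of moduli: reducing $\dl_i^{e_i}$ inside $(\mathbb{Z}/p^i\mathbb{Z})^{\times}$ forces $e_j\equiv e_i$ modulo the order of $\dl_i$, so the exponent congruence is in fact governed by $|\Delta_i|=\phi(p^i)=p^{i-1}(p-1)$, and I would record the condition in those terms. With that caveat the whole proposition reduces to the definition of inverse limit together with the two standard identifications $\underleftarrow{lim}\,\mathbb{Z}/p^j\mathbb{Z}\cong J_p$ and $\underleftarrow{lim}\,(\mathbb{Z}/p^j\mathbb{Z})^{\times}\cong(J_p)^{*}$.
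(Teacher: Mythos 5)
Your proof is correct, but there is nothing in the paper to compare it against: the paper states this proposition as a known fact (with a citation) and gives no proof at all, so your writeup supplies the verification that the paper omits. What you do — unwind each inverse limit as coherent sequences in the direct product, then transport the transition maps through the identifications $N_j\cong\mathbb{Z}/p^j\mathbb{Z}$ (via $\sigma_j^a\mapsto a$) and $t:\Delta_j\cong(\mathbb{Z}/p^j\mathbb{Z})^{\times}$ (via $\delta_j\mapsto\pi$) so that $\nu_{j,i}$ and $\phi_{j,i}$ become the reduction maps — is the standard argument and is surely what the cited reference contains. Two of your side remarks deserve emphasis. First, your closing caveat is not merely bookkeeping: as literally written, the paper's condition $e_j\equiv e_i\pmod{p^i}$ in the $\Delta_\infty$ block is wrong, since $\phi_{j,i}(\delta_j^{e_j})=\delta_i^{e_j}$ and $\delta_i$ has order $\phi(p^i)=p^{i-1}(p-1)$, so the exponent congruence is governed by $\phi(p^i)$ exactly as you say; equivalently, the condition should be placed on the units, $\pi^{e_j}\equiv\pi^{e_i}\pmod{p^i}$, which is evidently the paper's intent given its remark that the $e_j$ ``lie in $(\mathbb{Z}_p)^{*}$.'' Your formulation repairs this. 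Second, a caution you inherit from the paper rather than introduce: it is not true that every primitive root mod $p$ remains a primitive root mod $p^j$ for all $j$; one must choose $\pi$ to be a primitive root mod $p^2$ (such a choice always exists), after which it is a primitive root mod every $p^j$ and your compatibility $t_i\circ\phi_{j,i}=\mathrm{red}_{j,i}\circ t_j$, checked on the generator $\delta_j$, goes through. With that adjustment your argument is complete.
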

Note, exponents $a_j$ in the definition of $\NI$ lie in $\mathbb{Z}_{p}$ whereas the $e_j$ in the definition of $\DI$ lie in $(\mathbb{Z}_{p})^{*}$. And since component-wise $\Delta_{j}$ is the automorphism group of each $N_{j}$, then the congruence conditions on the respective exponents $a_j$ and $e_j$ yield the following which is also known. 
\begin{proposition} $Aut(\NI)\cong\DI$ 
\label{aut}
\end{proposition}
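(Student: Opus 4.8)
\noindent The plan is to leverage the identifications just established, namely $\NI\cong J_p$ and $\DI\cong (J_p)^{*}$, and to compute the automorphism group of $J_p$ directly. First I would stress that the relevant morphisms are the continuous ones: as an abstract abelian group $J_p$ carries far more automorphisms than we want, so throughout $Aut$ must mean automorphisms in the category of profinite groups, equivalently continuous automorphisms of $\NI$, equivalently those respecting the inverse-limit structure. With that understood, the answer is forced by what happens to a single topological generator.

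\noindent Concretely, I would fix a topological generator $\sigma$ of $\NI$ corresponding to $1\in J_p$ (compatibly with the generators $\sigma_j$ of the $N_j$), so that $\mathbb{Z}$ embeds densely via $n\mapsto\sigma^n$. Given a continuous endomorphism $\phi$, write $\phi(\sigma)=\sigma^{c}$ for a unique $c\in J_p$; additivity gives $\phi(\sigma^n)=\sigma^{cn}$ for $n\in\mathbb{Z}$, and continuity together with the density of $\mathbb{Z}$ in $J_p$ extends this to $\phi(\sigma^{x})=\sigma^{cx}$ for every $x\in J_p$. Thus every continuous endomorphism of $\NI$ is multiplication by some $c\in J_p$.

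\noindent Next I would identify which $c$ give automorphisms. If $c\in (J_p)^{*}$ then multiplication by $c^{-1}$ is a continuous two-sided inverse, while if $c\notin (J_p)^{*}$ then $c\in pJ_p$ and the image lies in $pJ_p\subsetneq J_p$, so $\phi$ fails to be surjective. Hence $\phi$ is an automorphism if and only if $c\in (J_p)^{*}$. Finally, the assignment $c\mapsto(\text{multiplication by }c)$ is a homomorphism $(J_p)^{*}\to Aut(\NI)$ because $(cd)x=c(dx)$, it is injective since $c$ is recovered as the exponent of $\phi(\sigma)$, and it is surjective by the previous step; therefore $Aut(\NI)\cong (J_p)^{*}\cong\DI$.

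\noindent An equivalent route, closer to the inverse-limit language of the paper, is to note that $\ker(\NI\to N_i)$ is the unique open subgroup of index $p^i$, hence characteristic, so any continuous automorphism descends to each $N_i$; the induced maps are compatible with the $\nu_{j,i}$ and assemble to an element of $\underleftarrow{lim}\,Aut(N_j)=\underleftarrow{lim}\,\Delta_j=\DI$, with the inverse built by pushing a compatible family back through the limit. The main obstacle in either approach is a point of care rather than real difficulty: one must restrict to continuous automorphisms, since the claim is false for abstract automorphisms of $J_p$, and one must justify that automorphisms of the inverse limit correspond to compatible families of automorphisms of the $N_j$. This last equivalence, $Aut(\underleftarrow{lim}N_j)\cong\underleftarrow{lim}Aut(N_j)$, holds here precisely because the transition maps are surjective with characteristic kernels, but it fails for general inverse systems, so the structure of this particular system is what makes the statement work.
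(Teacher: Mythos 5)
Your proof is correct, and both of your routes are essentially the ones the paper itself takes: the paper deduces the proposition from the fact that, component-wise, $\Delta_j=Aut(N_j)$ together with the matching congruence conditions on exponents, i.e.\ $Aut(\NI)\cong\underleftarrow{lim}\,Aut(N_j)=\underleftarrow{lim}\,\Delta_j=\DI$ (your second route), and it remarks that the result can also be deduced from $Aut(J_p)\cong (J_p)^{*}$ (your first route, which you carry out in detail via multiplication by $c\in J_p$). One correction, however: your cautionary claim that the statement is \emph{false} for abstract automorphisms of $J_p$ is itself false. Every abstract group endomorphism $\phi$ of the additive group $J_p$ satisfies $\phi(p^nJ_p)\subseteq p^nJ_p$, since $p^nJ_p$ is precisely the set of elements divisible by $p^n$ and additive maps preserve divisibility; hence $\phi$ is automatically continuous, and writing $x$ as a limit of integers $x_n$ gives $\phi(x)=x\,\phi(1)$ exactly as in your argument. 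So the abstract and continuous automorphism groups of $\NI\cong J_p$ coincide, both equal to $(J_p)^{*}$; restricting to continuous (profinite) automorphisms is a natural framing but not a necessary hypothesis, and the paper's unadorned ``$Aut$'' is harmless for the same reason. This correction does not affect the validity of your proof, which establishes exactly what the paper asserts.
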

This could also be deduced from the fact that $Aut(\mathbb{Z}_p)\cong (\mathbb{Z}_{p})^{*}$. Moreover, this implies that $\DI$ acts by restriction on each $N_i$ as $Aut(N_i)$. As such, $\DI$ acts on each $\mathbb{Q}(\zeta_{\infty})[N_i]$ which yields the following.
\begin{lemma} 
\label{commute}
The following diagram commutes:
$$\diagram
\mathbb{Q}(\zeta_{\infty})[N_{j}] \dto_{\dl_{j}}\rto^{\nu_{j,i}}  & \mathbb{Q}(\zeta_{\infty})[N_{i}] \dto^{\dl_{i}} \\
\mathbb{Q}(\zeta_{\infty})[N_{j}] \rto_{\nu_{j,i}}                     & \mathbb{Q}(\zeta_{\infty})[N_{i}] \\
\enddiagram$$
\end{lemma}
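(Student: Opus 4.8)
The plan is to verify commutativity directly on a spanning set, reducing the statement to the single key fact that the automorphism action on the groups $N_j$ is compatible with the projections $\nu_{j,i}$. Both the diagonal action of $\delta\in\DI$ and the map $\nu_{j,i}$ are additive, so it suffices to check that the two composites agree on a typical monomial $q\,\sg_{j}^{b}$ with $q\in\mathbb{Q}(\zeta_{\infty})$ and $b\in\mathbb{Z}_{p^j}$. The argument then splits into a coefficient part and a group-element part. On the coefficient side the verification is immediate: by Lemma~\ref{nu} the map $\nu_{j,i}$ is the identity on $\mathbb{Q}(\zeta_{\infty})$, and the Galois action of $\delta$ on the scalar $q$ is the action on the fixed coefficient ring $\mathbb{Q}(\zeta_{\infty})$, which is the same whether we regard $\delta$ as acting on $\mathbb{Q}(\zeta_{\infty})[N_j]$ or on $\mathbb{Q}(\zeta_{\infty})[N_i]$; hence ${}^{\delta}q$ is produced identically along either route.

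The substance is therefore the group-element part. Writing $e_j$ for the residue of $\delta$ modulo $p^{j}$ (so $\delta$ acts on $N_j$ as $\sg_{j}\mapsto\sg_{j}^{e_j}$ by Proposition~\ref{aut}), I would compute the two paths as follows. Going down then right, $q\,\sg_{j}^{b}\mapsto {}^{\delta}q\,\sg_{j}^{b e_j}\mapsto {}^{\delta}q\,\sg_{i}^{b e_j}$, using $\nu_{j,i}(\sg_{j})=\sg_{i}$. Going right then down, $q\,\sg_{j}^{b}\mapsto q\,\sg_{i}^{b}\mapsto {}^{\delta}q\,\sg_{i}^{b e_i}$. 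Comparing, the two outputs coincide precisely when $\sg_{i}^{b e_j}=\sg_{i}^{b e_i}$, which holds because $e_j\equiv e_i\pmod{p^{i}}$ (the defining congruence of the inverse limit $\DI$ recorded in the proposition preceding Proposition~\ref{aut}) together with $\operatorname{ord}(\sg_{i})=p^{i}$, so that the exponents $b e_j$ and $b e_i$ are equal modulo $p^{i}$.

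The one point requiring care — and what I would flag as the main (if modest) obstacle — is to interpret the two vertical arrows correctly: they are the restriction of one and the same $\delta\in\DI$ to the two group rings, not independently chosen generators of $\Delta_j$ and $\Delta_i$. Once this is fixed, the coefficient action matches automatically and the only genuine content is the compatibility of the $\operatorname{Aut}$-action with $\nu_{j,i}$, i.e.\ that $\nu_{j,i}\colon N_j\to N_i$ is $\DI$-equivariant. This equivariance is exactly the inverse-limit condition $e_j\equiv e_i\pmod{p^{i}}$ built into the construction of $\DI$ and $\NI$, so the lemma follows formally from the already-established structure rather than from any new computation.
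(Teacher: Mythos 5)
Your proof is correct. The paper in fact offers no written proof of this lemma at all---it is stated as immediate from the preceding observation that $\Delta_{\infty}$ acts diagonally on each $\mathbb{Q}(\zeta_{\infty})[N_i]$---and your monomial-by-monomial check, which isolates the coefficient part (where $\nu_{j,i}$ is the identity) and reduces the group-element part to the inverse-limit congruence $e_j\equiv e_i \pmod{p^i}$ together with $\mathrm{ord}(\sigma_i)=p^i$, is precisely the verification the paper leaves implicit, including the correct reading of the two vertical arrows as restrictions of a single element of $\Delta_{\infty}$.
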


\noindent Lemmas \ref{nu},\ref{nuhopf}, and \ref{commute} imply that
$$\{\mathbb{Q}(\zeta_{\infty})[N_{j}]\} \ and \ \{(\mathbb{Q}(\zeta_{\infty})[N_j])^{\DI}\}\ and\ \{H_j\}$$
are inverse systems with respect to $\nu_{j,i}$ where we have:
$$\underleftarrow{lim}\mathbb{Q}(\zeta_{\infty})[N_{j}] = \QI[\NI]$$
and if we define 
$$\HI = \underleftarrow{lim} H_{j}$$ 
we ask what the relationship is between $\QI[\NI]$ and $\HI$?\par
\noindent We have the following:
\begin{theorem}
\begin{align*}
&(a) \ \TI\HI \cong \QI[\NI] \\
&(b) \ \HI = (\QI[\NI])^{\DI}
\end{align*}
\end{theorem}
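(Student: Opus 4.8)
The plan is to establish part (b) first and then obtain part (a), since both reduce to a finite-level computation together with a single limit-interchange issue that is the real heart of the matter. For (b) I would exploit the fact that an inverse limit and a fixed-point functor are both limits, and therefore commute. By Lemma \ref{commute} the group $\DI$ acts on $\QI[\NI]=\varprojlim_j \QI[N_j]$ componentwise, so a coherent sequence is $\DI$-fixed precisely when each of its components is; hence
$$(\QI[\NI])^{\DI}=\varprojlim_j (\QI[N_j])^{\DI}.$$
It then suffices to identify the finite-level fixed rings and check that the transition maps are the correct ones.

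To compute $(\QI[N_j])^{\DI}$, write $U_j=Gal(\QI/\mathbb{Q}(\zeta_j))=\ker(\DI\twoheadrightarrow\Delta_j)$, using that $\DI$ acts on $N_j$ through its quotient $\Delta_j\cong Aut(N_j)$ (Proposition \ref{aut}). The subgroup $U_j$ fixes $N_j$ pointwise and acts on $\QI$ with fixed field $\mathbb{Q}(\zeta_j)$, so $(\QI[N_j])^{U_j}=\mathbb{Q}(\zeta_j)[N_j]$; taking the residual $\Delta_j=\DI/U_j$ fixed points then recovers exactly the diagonal fixed ring $(\mathbb{Q}(\zeta_j)[N_j])^{\Delta_j}=H_j$ of Proposition \ref{Hbasis}. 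One finally verifies that the maps induced on these fixed rings by the $\nu_{j,i}$ are precisely the transition maps of Lemma \ref{nuhopf}, so that $\varprojlim_j H_j=\HI$, which gives (b).

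For (a) the natural route is to assemble the finite-level forms. Passing to the union $\QI=\bigcup_m\mathbb{Q}(\zeta_m)$ in Lemma \ref{QmtensorHn} gives $\QI\otimes_{\mathbb{Q}}H_j\cong\QI[N_j]$ for every $j$, and one checks these isomorphisms are compatible with the $\nu_{j,i}$; passing to the limit then produces a natural map $\TI\HI\to\varprojlim_j(\QI\otimes_{\mathbb{Q}}H_j)=\QI[\NI]$. Equivalently, beginning from (b) one would like to invoke Galois descent for the profinite extension $\QI/\mathbb{Q}$ with group $\DI$, applied to the semilinear module $\QI[\NI]$, to conclude $\TI(\QI[\NI])^{\DI}\cong\QI[\NI]$.

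The main obstacle is exactly this passage to the limit in (a): the base change $\TI(-)$ does not commute with inverse limits for free, because $\QI/\mathbb{Q}$ is an infinite extension. Writing $\QI=\varinjlim_m\mathbb{Q}(\zeta_m)$, part (a) amounts to the interchange $\varinjlim_m\varprojlim_j\cong\varprojlim_j\varinjlim_m$ for the double system $\mathbb{Q}(\zeta_m)\otimes_{\mathbb{Q}}H_j$, and such an interchange of a filtered colimit with a cofiltered limit is not formal. In the descent formulation the same difficulty appears as the requirement that the $\DI$-action on $\QI[\NI]$ be continuous, i.e.\ that each element have open stabilizer, which is needed for $\TI(\QI[\NI])^{\DI}\to\QI[\NI]$ to be surjective. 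I expect the resolution to require either the Mittag--Leffler property of the surjective system $\{H_j,\nu_{j,i}\}$ (Lemma \ref{nu}) together with a continuity argument on the coefficients, or an interpretation of $\TI$ as a completed tensor product adapted to the profinite structure; this is the step that genuinely uses the profinite nature of $\HI$, and where I would concentrate the effort.
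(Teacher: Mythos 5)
Your part (b) is correct and is in substance the paper's own argument: both reduce to the componentwise statement that a $\DI$-fixed coherent sequence in $\Pi_{j\geq 1}\QI[N_j]$ must have all of its components in the $H_j$, with the reverse inclusion $\HI\subseteq(\QI[\NI])^{\DI}$ immediate. Your two-step computation of $(\QI[N_j])^{\DI}$ --- first the fixed ring under $U_j=\ker(\DI\rightarrow\Delta_j)$, which acts only on coefficients and cuts $\QI$ down to $\mathbb{Q}(\zeta_j)$, then the residual diagonal $\Delta_j$-action giving $H_j$ --- is in fact tighter than the paper's element-by-element argument, which elides exactly this point.

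Part (a) is where your proposal has a genuine gap. You correctly isolate the obstruction (the algebraic tensor product $\TI(-)$ need not commute with $\varprojlim_j$), but you do not resolve it; you only name two candidate tools, and neither is the one that works. Mittag--Leffler for the surjective system $\{H_j,\nu_{j,i}\}$ concerns exactness of $\varprojlim$ (vanishing of the derived limit), not the commutation of $\otimes$ with $\varprojlim$, so it does not address the interchange $\varinjlim_m\varprojlim_j\leftrightarrow\varprojlim_j\varinjlim_m$ at all; and reinterpreting $\TI$ as a completed tensor product would prove a different theorem, since the statement concerns the ordinary tensor product. The paper's resolution is more elementary and hinges on finiteness of the coefficient extensions rather than on any property of the system $\{H_j\}$: embed $\HI$ into the full direct product $\Pi_{n\geq 1}H_n$; since each $\mathbb{Q}(\zeta_m)$ is a finitely generated projective $\mathbb{Q}$-module, the canonical map
$$
\mathbb{Q}(\zeta_m)\otimes\Pi_{n\geq 1}H_n\longrightarrow\Pi_{n\geq 1}\bigl(\mathbb{Q}(\zeta_m)\otimes H_n\bigr)
$$
is bijective (\cite[Prop. 1.1]{Webb}); then, because $\otimes$ \emph{does} commute with the filtered union $\QI=\varinjlim_m\mathbb{Q}(\zeta_m)$, one gets $\QI\otimes\Pi_{n\geq 1}H_n\cong\varinjlim_m\Pi_{n\geq 1}(\mathbb{Q}(\zeta_m)\otimes H_n)$, whose $n$-th component is identified as $\QI[N_n]$ using Lemma \ref{QmtensorHn} (you invoke that lemma only after base change all the way up to $\QI$; the paper needs it at each finite level $\mathbb{Q}(\zeta_m)$, which is why it records the case $m<n$ there). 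Finally $\TI\HI$ is recognized inside this product as the subalgebra of $\nu$-coherent sequences, i.e.\ as $\varprojlim_j\QI[N_j]=\QI[\NI]$. Without this (or some substitute) argument, your natural map $\TI\HI\to\QI[\NI]$ has not been shown to be an isomorphism, and that is precisely the content of (a).
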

\begin{proof} 
That $\DI$ acts on $\QI[\NI]$ is clear given the previous observations that $\DI$ is isomorphic to  $Gal(\QI/\mathbb{Q})$ and $Aut(\NI)$. Moreover by \ref{commute} we have that $\dl_{i}(\nu_{j,i}(x))=\nu_{j,i}(\dl_{j}(x)) = \nu_{j,i}(\phi_{j,i}(\dl_{i})(x))$ for all $x\in\mathbb{Q}(\zeta_{j})[N_{j}]$.  (i.e. we may think of the $\nu_{j,i}$'s as $\DI$-maps) 
By virtue of \ref{nu} and \ref{commute} we have:
$$\diagram
H_{1}&  H_{2}\lto^{\nu_{2,1}} & \dots\lto^{\nu_{3,2}} & H_{n}\lto & \lto\dots \\ 
                                   & \HI\ulto^{\psi_1}\uto^{\psi_2}\urrto^{\psi_n} \\
\enddiagram$$
where the $\psi_i$ are the canonical projections out of the direct limit. If we base change the above up to $\QI$ then we have the following:

$$\diagram
\QI\otimes H_{1} &  \QI\otimes H_{2}\lto^{\nu_{2,1}\otimes 1} & \dots\lto^{\ \ \ \ \ \ \nu_{3,2}\otimes 1} & \QI\otimes H_{n}\lto & \lto\dots \\ 
		                   & \QI\otimes \HI\ulto^{\psi_1\otimes 1}\uto^{\psi_2\otimes 1}\urrto^{\psi_n\otimes 1} \\
\enddiagram$$

But since $\mathbb{Q}(\zeta_{n})\otimes_{\mathbb{Q}}H_{n}\cong\mathbb{Q}(\zeta_{n})[N_{n}]$ and since $\mathbb{Q}(\zeta_{n})\subseteq\QI$ for all $n$ then the above diagram becomes:
$$\diagram
\QI N_{1} &  \QI N_{2}\lto^{\nu_{2,1}\otimes 1} & \dots\lto^{\ \ \nu_{3,2}\otimes 1} & \QI N_{n}\lto & \lto\dots \\ 
		                   & \QI\otimes\HI\ulto^{\psi_1\otimes 1}\uto^{\psi_2\otimes 1}\urrto^{\psi_n\otimes 1} \\
\enddiagram$$
In general, direct limits 'commute' with the taking of tensor products, but the same is not true generally for inverse limits, since tensor product does not usually commute with direct products. However, we can 'build up' to $\QI\otimes\HI$ by first looking at $\mathbb{Q}(\zeta_m)\otimes \Pi_{n\geq 1}H_n$ where each $\mathbb{Q}(\zeta_m)$ is certainly finitely generated and projective as a $\mathbb{Q}$-module. As such, by \cite[Prop. 1.1]{Webb} the canonical map $\mathbb{Q}(\zeta_m)\otimes \Pi_{n\geq 1}H_n\rightarrow \Pi_{n\geq 1}\mathbb{Q}(\zeta_m)\otimes H_n$ is a bijection. By \ref{QmtensorHn} we have that $\mathbb{Q}\otimes H_n$ contains $\mathbb{Q}(\zeta_m)[\langle \sigma_n^{p^{n-m}}\rangle]$. And since tensor product {\it does} commute with direct limits, we have
\begin{align*}
\QI\otimes \Pi_{n\geq 1}H_n&\cong (\mathop{\lim_{\longrightarrow}}_{m} \mathbb{Q}(\zeta_m))\otimes \Pi_{n\geq 1}H_n\\
                        &\cong \mathop{\lim_{\longrightarrow}}_{m} (\mathbb{Q}(\zeta_m)\otimes \Pi_{n\geq 1}H_n)\\
                        &\cong \mathop{\lim_{\longrightarrow}}_{m} (\Pi_{n\geq 1}\mathbb{Q}(\zeta_m)\otimes H_n)\\
\end{align*}
where now, viewing the direct limit as union, we have that each component in the direct product $\QI\otimes\Pi_{n\geq 1}H_n$ is exactly $\QI[N_n]$. So, within this direct product, we have the sub-algebra determined by the $\nu_{n,n-1}$ which is the inverse limit of $\QI[N_n]$, that is
$$
\TI\HI=\TI(\underleftarrow{lim}H_{n})=\underleftarrow{lim}(\QI[N_{n}])=\QI[\NI]
$$
which completes the proof of (a).\par
To show (b) we we shall use the canonical constructions of $\underleftarrow{lim}(\mathbb{Q}(\zeta_{\infty}) N_{j})=\QI[\NI]$ and $\underleftarrow{lim}H_{i}=\HI$. We have
\begin{align*}
\QI[\NI]&\cong\{\{\gamma_{j}\}\in\Pi_{j=1}^{\infty} \mathbb{Q}(\zeta_{\infty})[N_{j}]\ |\ \nu_{j,i}(\gamma_j)=\gamma_i  )\} \\
\HI&\cong\{\{\gamma_{j}\}\in\Pi_{j=1}^{\infty} H_{j}\ |\ \nu_{j,i}(\gamma_j)=\gamma_i  )\} \\
\end{align*}
where the usage of $\gamma_j$ in both is not an abuse of notation since $H_{j}\subseteq \mathbb{Q}(\zeta_{\infty})[N_{j}]$ for each $j$ and so also there is containment of the direct products. \par
Now if $\hat{\delta}=\{\delta_{j}^{e_j}\}\in\DI$ and $\{\gamma_j\}\in \QI[\NI]$ then $\{\gamma_j\}^{\hat{\delta}}=\{\gamma_j\}$ implies that $\delta^{e_j}(\gamma_j)=\gamma_j$ for all $j$. The question is does this imply that $\gamma_j\in H_j$ for all $j\geq 1$? Yes, because $\DI$ contains $\{\delta_j^{e_j}\}$ where $e_j=1$ for any specified $j\geq 1$, so indeed $\delta_j(\gamma_j)=\gamma_j$ for each $j\geq 1$ and therefore $\{\gamma_j\}\in\Pi_{j=1}^{\infty}H_j$. But now, since $\{\gamma_j\}\in\QI[\NI]$ we have $\nu_{j,i}(\gamma_j)=\gamma_i$ so when restricted to $\gamma_j\in H_j$ we have $\{\gamma_j\}\in \HI$. Thus $(\QI[\NI])^{\DI}\subseteq \HI$.\par
 The other inclusion is obvious since $\HI\subseteq\QI[\NI]$ and is fixed by {\it all} of $\Pi_{j=1}^{\infty}\Delta_j$, so therefore by $\DI$.
\end{proof}
One should note that $\HI$ is generated by $\{e_{n,i_n}\}$ (in the direct product) where $i_n=p\cdot i_{n-1}$ which makes sense if we go back to the observation earlier that each $H_{n}$ is isomorphic to $(\mathbb{Q}N_{n})^{*}$ where now
\begin{align*}
\HI &\cong \underleftarrow{lim}(\mathbb{Q}[N_{n}])^{*}\\
    &=\underleftarrow{lim}Hom(\mathbb{Q}[N_{n}],\mathbb{Q})\\
    &\cong Hom(\underrightarrow{lim}\mathbb{Q}[N_{n}],\mathbb{Q})\\
\end{align*}
where $\underrightarrow{lim}\mathbb{Q}[N_{n}]$ is the group ring over the $p$-Pr\"ufer group formed from the union of the $\{N_{n}\}$ since each are cyclic of order $p^n$. It is also interesting to note that (again as cited in \cite{LF}) the automorphism group of the $p$-Pr\"ufer group is also isomorphic to $\DI\cong (J_p)^{*}$. Undoubtedly, this is obliquely a manifestation/result of the fact (cited in \cite{Armacost} for example) that $J_p$ and the $p$-Pr\"ufer group are Pontryagin duals of each other. That $\HI$ is a Hopf algebra is a consequence of the fact that $\QI/\mathbb{Q}$ (being a direct limit of faithfully flat extensions) is a faithfully flat extension so by general descent theory \cite{P} such an extension preserves and reflects structures such as Hopf algebras. That is, since $\HI$ is a $\QI$ form of $\mathbb{Q}[\NI]$ which is a Hopf algebra then $\HI$ is a Hopf algebra over $\mathbb{Q}$. This is also a rare example of where the dual of an infinite dimensional Hopf algebra is itself a Hopf algebra since typically one loses the 'closure' of the induced co-algebra structure on the dual in this setting. Aside from this descent theoretic proof of this fact, it is the author's conjecture that $\HI$ is Hopf, even though it is the dual of an infinite group, since said infinite group is torsion. That is the finite dual {\it is} the dual.\par
Additionally, in view of \ref{aut} and that $Gal(\QI/\mathbb{Q})=\DI$, this yields a nice generalization of Theorem 5 of \cite{HP}.\par
\section{$\QWI / \mathbb{Q}$ as an $\HI$-Galois extension}
\noindent The field $\QWI$ is certainly linearly disjoint to $\QI$ over $\mathbb{Q}$. Moreover, the normal closure of $\QWI$ contains $\QI\QWI$ since the splitting field for all polynomials of the form $x^{p^n}-a$ must contain $\QI$ and $\QWI$. It then must be contained in $\QI\QWI$ since the minimal polynomial of any element in $\QWI$ is split in $\QI\QWI$. The question is, can we view $\QWI/\mathbb{Q}$ as a Hopf-Galois extension with respect to the action of $\HI$?\par
The difficulty that arises is in verifying that $\HI$ acts on $\QWI$ in the same way that a profinite Galois group would act on a direct limit (union) of the intermediate fields. Since $\QWI$ is the direct limit (union) over all $\mathbb{Q}(w^{p^n})\subseteq \QWI$ then if it were a normal extension, its Galois group would be the inverse limit of the Galois groups of each intermediate field over $\mathbb{Q}$. We have that for each $n$ there is an isomorphism $\mathbb{Q}(w_n)\# H_{n}\cong End_{\mathbb{Q}}(\mathbb{Q}(w_n))$ and since $\QWI=\underrightarrow{lim}\mathbb{Q}(w_n)$ then we wish to examine the relationships between $End_{\mathbb{Q}}(\QWI)$ and $\QWI\#\HI$.\par
We first observe that
\begin{align*}
 End(\QWI)&=Hom( \mathop{\lim_{\longrightarrow}}_{n}\mathbb{Q}(w_n), \mathop{\lim_{\longrightarrow}}_{m}\mathbb{Q}(w_m))\\
          &\cong  \mathop{\lim_{\longleftarrow}}_{n} Hom(\mathbb{Q}(w_n), \mathop{\lim_{\longrightarrow}}_{m}\mathbb{Q}(w_m))\\ 
          &\cong \mathop{\lim_{\longleftarrow}}_{n} \left[\mathop{\lim_{\longrightarrow}}_{m} Hom(\mathbb{Q}(w_n),\mathbb{Q}(w_m))\right]\\ 
          &
\end{align*} 
where the direct limit (over $n$) in the first component becomes the inverse limit induced by the natural restriction map 
$$Hom(\mathbb{Q}(w_{n}),\mathop{\lim_{\longrightarrow}}_{m} \mathbb{Q}(w_m))\longrightarrow Hom(\mathbb{Q}(w_{n-1},\mathop{\lim_{\longrightarrow}}_{m}\mathbb{Q}(w_m))$$ since $\mathbb{Q}(w_{n-1})\subseteq \mathbb{Q}(w_{n})$. The direct limit (over $m$) from the second component is permitted to be moved outside due to the fact that $\mathbb{Q}(w_{n})$ is finitely presented for each $n$.
\begin{proposition} 
The algebra $Hom(\mathbb{Q}(w_n),\mathbb{Q}(w_m))$ is isomorphic to\par
(a) $\mathbb{Q}(w_{m})\# \overline{H}_{m,n}$ where $\overline{H}_{m,n}$ is the sub-algebra of $H_{m}$ spanned by $\{e_{m,i}\}$ for $i\in p^{m-n}\mathbb{Z}_{p^n}\subseteq\mathbb{Z}_{p^m}$ if $m\geq n$ or\par
(b) the sub-algebra of $\mathbb{Q}(w_{n})\# H_{n}$ spanned by $\{w_n^j\#e_{n,i}\}$ where $i\in\mathbb{Z}_{p^n}$ where $p^{n-m}\div j+i$ if $m<n$.
\end{proposition}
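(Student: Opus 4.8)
The plan is to reduce both cases to the already-established identification $\mathbb{Q}(w_n)\#H_n\cong End_{\mathbb{Q}}(\mathbb{Q}(w_n))$ recorded in \eqref{endact}, together with the two field containments in play: since $w_{n}^{p}=w_{n-1}$, one has $\mathbb{Q}(w_n)\subseteq\mathbb{Q}(w_m)$ when $m\geq n$ (with $w_n=w_m^{p^{m-n}}$) and $\mathbb{Q}(w_m)\subseteq\mathbb{Q}(w_n)$ when $m<n$ (with $w_m=w_n^{p^{n-m}}$). The one computational input I need is the \emph{read/write} description coming from \ref{Hact}: the operator $w^{j}\#e_{\cdot,i}$ reads off the $w^{i}$-coordinate and writes it into the $w^{j+i}$-coordinate. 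Everything below is bookkeeping with this single rule.

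For (a), with $m\geq n$, I would realize $\mathbb{Q}(w_n)$ inside $\mathbb{Q}(w_m)$ as the span of $\{w_m^{i}:i\in p^{m-n}\mathbb{Z}_{p^n}\}$ and consider the restriction-of-domain map $End_{\mathbb{Q}}(\mathbb{Q}(w_m))\to Hom(\mathbb{Q}(w_n),\mathbb{Q}(w_m))$. Because $w_m^{j}\#e_{m,i}$ only reads the $w_m^{i}$-coordinate, it vanishes identically on $\mathbb{Q}(w_n)$ precisely when $i\notin p^{m-n}\mathbb{Z}_{p^n}$; hence the restriction kills exactly the basis vectors with $i\notin p^{m-n}\mathbb{Z}_{p^n}$ and is injective on the complementary span, which is $\mathbb{Q}(w_m)\#\overline{H}_{m,n}$. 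A dimension count ($p^{m}\cdot p^{n}=p^{m+n}$ on each side) then upgrades injectivity to a bijection, and an explicit check using the product rule $(w_m^{j}\#e_{m,i})(w_m^{k}\#e_{m,l})=w_m^{j+k}\#e_{m,l}$ (nonzero iff $k+l=i$) shows that the index set $\{i\in p^{m-n}\mathbb{Z}_{p^n}\}$ is stable under multiplication, so $\mathbb{Q}(w_m)\#\overline{H}_{m,n}$ is genuinely a subalgebra and carries the transported structure.

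For (b), with $m<n$, I would instead view $\mathbb{Q}(w_m)\subseteq\mathbb{Q}(w_n)$ as the span of $\{w_n^{s}:s\in p^{n-m}\mathbb{Z}_{p^m}\}$ and identify $Hom(\mathbb{Q}(w_n),\mathbb{Q}(w_m))$ with the subspace of $End_{\mathbb{Q}}(\mathbb{Q}(w_n))\cong\mathbb{Q}(w_n)\#H_n$ of endomorphisms whose image lands in $\mathbb{Q}(w_m)$ (compose the $Hom$ with the inclusion). Since $w_n^{j}\#e_{n,i}$ has image spanned by $w_n^{j+i}$, this image lies in $\mathbb{Q}(w_m)$ exactly when $p^{n-m}\div j+i$, singling out the asserted basis $\{w_n^{j}\#e_{n,i}:p^{n-m}\div j+i\}$. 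Counting admissible pairs gives $p^{m}$ choices of $j$ for each of the $p^{n}$ values of $i$, i.e. dimension $p^{m+n}$ as required, and closure under composition is automatic: when $(w_n^{j}\#e_{n,i})(w_n^{k}\#e_{n,l})$ is nonzero one has $k+l=i$, so the output index obeys $(j+k)+l=j+i$, which is divisible by $p^{n-m}$ as soon as the first factor is admissible.

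The main obstacle is conceptual rather than computational: $Hom(\mathbb{Q}(w_n),\mathbb{Q}(w_m))$ carries no a priori algebra structure when $m\neq n$, so one must be careful about how it inherits one. In (a) it arises as a subalgebra of $End_{\mathbb{Q}}(\mathbb{Q}(w_m))$ realized through the idempotent $\pi=\sum_{i\in p^{m-n}\mathbb{Z}_{p^n}}e_{m,i}$ (projection onto $\mathbb{Q}(w_n)$), giving a right ideal whose induced product is the sandwich $f\cdot g=f\circ\pi\circ g$, whereas in (b) it is cut out inside $End_{\mathbb{Q}}(\mathbb{Q}(w_n))$ by an image condition and composition is the ordinary one. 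Keeping the two index conventions $p^{m-n}\mathbb{Z}_{p^n}$ and $p^{n-m}\mathbb{Z}_{p^m}$ straight, and verifying closure under the smash product in each regime, is where the care is needed; the underlying linear algebra is routine once the read/write description of \eqref{endact} is in hand.
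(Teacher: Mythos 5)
Your proof is correct and takes essentially the same route as the paper's: both realize $Hom(\mathbb{Q}(w_n),\mathbb{Q}(w_m))$ inside the endomorphism algebra of the larger field---$\mathbb{Q}(w_m)\# H_m$ when $m\geq n$, $\mathbb{Q}(w_n)\# H_n$ when $m<n$---and use the read/write rule \eqref{endact} to see that the basis elements surviving the domain condition in (a), respectively the image condition in (b), are exactly those with $i\in p^{m-n}\mathbb{Z}_{p^n}$, respectively $p^{n-m}\mid j+i$. Your explicit dimension counts, the closure-under-multiplication checks, and the care about how $Hom(\mathbb{Q}(w_n),\mathbb{Q}(w_m))$ acquires its algebra structure are refinements of points the paper leaves implicit (it performs the dimension check only in case (b)), not a different argument.
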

\begin{proof} 
If $m\geq n$ then $Hom(\mathbb{Q}(w_n),\mathbb{Q}(w_m))\subseteq  Hom(\mathbb{Q}(w_m),\mathbb{Q}(w_m))$ where the latter is isomorphic to $\mathbb{Q}(w_{m})\# H_m$. As given in \eqref{endact}, 
$$
(w_m^j\# e_{m,i})(w_{m}^{tp^{m-n}})=\begin{cases} 0\ \ \ \ \ \ \ \ \ \ \ \ \ i\neq tp^{m-n} \\ w_m^{j+tp^{m-n}}\ i=tp^{m-n}\end{cases}
$$
so those elements of $\mathbb{Q}(w_{m})\# H_m$ that have domain $\mathbb{Q}(w_n)=\mathbb{Q}((w_m)^{p^{m-n}})$ are exactly $w_{m}^j\# e_{m,i}$ for $i\in p^{m-n}\mathbb{Z}_{p^n}\subseteq\mathbb{Z}_{p^m}$. If one views these as $p^m\times p^m$ matrices, then this sub-algebra consists of those matrices where (if numbering columns from $0$) have non-zero columns if the column index is in $p^{m-n}\mathbb{Z}_{p^n}\subseteq\mathbb{Z}_{p^m}$.\par
If $m<n$ then $w_m=w_n^{p^{n-m}}$ and so $Hom(\mathbb{Q}(w_n),\mathbb{Q}(w_m))\subseteq  Hom(\mathbb{Q}(w_n),\mathbb{Q}(w_n))$ where the latter is isomorphic to $\mathbb{Q}(w_n)\# H_n$. However, here the co-domain is restricted to $\mathbb{Q}(w_m)\subseteq \mathbb{Q}(w_n)$ so any $w_n^t$ must map to $\mathbb{Q}(w_m)$. Again, by \eqref{endact}, 
$$
(w_n^j\# e_{n,i})(w_{n}^{t})=\begin{cases} 0\ \ \ \ \ \ \ \ \ \ \ \ \ i\neq t \\ w_n^{j+t}\ \ \ \ \ \ \ i=t\end{cases}
$$
so as $t$ varies over $\mathbb{Z}_{p^n}$ so must $i$ which means $j$ is restricted by the condition that $j+t$ must be a multiple of $p^{n-m}$ which means $p^n\cdot p^{m}$ choices for $(j,i)$ which is dimensionally correct given the domain and co-domain of the homomorphisms in question.
\end{proof}
If one now considers the direct limit
$$\underset{m}{\underset{\longrightarrow}{\lim}} Hom(\mathbb{Q}(w_n),\mathbb{Q}(w_m))$$
for a given $n$, then one is looking at endomorphisms of $\mathbb{Q}(w_m)$, generated by left multiplication by elements of $\mathbb{Q}(w_m)$ together with those arising from each sub-algebra, either $H_n$ (when $m<n$) or $\overline{H}_{m,n}$ for those $m\geq n$. If for notational uniformity we define $\overline{H}_{m,n}=H_n$ when $m<n$ then we wish to first consider the direct limit  $\underset{m}{\underset{\longrightarrow}{\lim}}\overline{H}_{m,n}$.\par
Although we are considering the action on $\QWI$ by $\HI$, which is the {\it inverse} limit of the $H_{m}$, there is a natural embedding of $H_{m}$ into $H_{m+1}$ via $e_{m,i}\mapsto e_{m+1,pi}$. Concordantly, for $m\geq n$ this restricts to an embedding $\overline{H}_{m,n}\hookrightarrow \overline{H}_{m+1,n}$ for each $n$ since if $i\in p^{m-n}\mathbb{Z}_{p^n}$ then $pi\in p^{m+1-n}\mathbb{Z}_{p^n}$. However, this embedding is, in fact, an isomorphism since $dim(\overline{H}_{m,n})=p^n$ for each $m$! Moreover, for each $m<n$, $\overline{H}_{m,n}=H_{n}$ so that, in fact:
$$\underset{m}{\underset{\longrightarrow}{\lim}}\overline{H}_{m,n}\cong H_n$$
and since the union of the scalars $\mathbb{Q}(w_m)$ is $\QWI$ then we have proved:
\begin{proposition}
$$\underset{m}{\underset{\longrightarrow}{\lim}} Hom(\mathbb{Q}(w_n),\mathbb{Q}(w_m))\cong \QWI\#H_n$$ 
\end{proposition}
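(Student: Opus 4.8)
The plan is to combine the preceding proposition with an analysis of the transition maps defining the direct limit, taking advantage of the fact that for fixed $n$ the tail $\{m\geq n\}$ is cofinal, so that the terms with $m<n$ contribute nothing to $\mathop{\lim_{\longrightarrow}}_{m}Hom(\mathbb{Q}(w_n),\mathbb{Q}(w_m))$. First, for each $m\geq n$, I would use the previous proposition to identify $Hom(\mathbb{Q}(w_n),\mathbb{Q}(w_m))$ with the sub-algebra $\mathbb{Q}(w_m)\#\overline{H}_{m,n}$ of $\mathbb{Q}(w_m)\#H_m\cong End_{\mathbb{Q}}(\mathbb{Q}(w_m))$, namely those endomorphisms whose domain is the subfield $\mathbb{Q}(w_n)=\mathbb{Q}(w_m^{p^{m-n}})$; concretely this is the span of $\{w_m^j\#e_{m,i}\}$ with $i\in p^{m-n}\mathbb{Z}_{p^n}$.

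Next I would pin down the transition map, which is induced by post-composing with the inclusion $\mathbb{Q}(w_m)\hookrightarrow\mathbb{Q}(w_{m+1})$ of codomains. Using $w_m=w_{m+1}^{p}$ and the explicit action recorded in \eqref{endact}, a direct check shows that the basis element $w_m^j\#e_{m,i}$ is sent to $w_{m+1}^{pj}\#e_{m+1,pi}$. The structural point is that this map is the tensor product of the scalar inclusion $\mathbb{Q}(w_m)\hookrightarrow\mathbb{Q}(w_{m+1})$ with the embedding $e_{m,i}\mapsto e_{m+1,pi}$ of Hopf parts, the latter being precisely the restriction to $\overline{H}_{m,n}$ of the natural map $H_m\hookrightarrow H_{m+1}$ used above.

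It then remains to compute the two limits separately and reassemble them. The colimit of the scalars is $\bigcup_{m}\mathbb{Q}(w_m)=\QWI$. On the Hopf side, each $\overline{H}_{m,n}$ has $\mathbb{Q}$-dimension $p^n$ and the embedding $e_{m,i}\mapsto e_{m+1,pi}$ is injective, hence an isomorphism, so $\mathop{\lim_{\longrightarrow}}_{m}\overline{H}_{m,n}\cong H_n$ exactly as recorded above. Since the underlying space of each smash product is $\mathbb{Q}(w_m)\otimes\overline{H}_{m,n}$ and direct limits commute with tensor products, the limit of the underlying spaces is $\QWI\otimes H_n$, and transporting the twisted multiplication through the system gives $\mathop{\lim_{\longrightarrow}}_{m}\bigl(\mathbb{Q}(w_m)\#\overline{H}_{m,n}\bigr)\cong\QWI\#H_n$.

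The main obstacle is precisely this last step: one must check that the transition maps respect the twisted (smash) multiplication, so that the colimit is the algebra $\QWI\#H_n$ and not merely an isomorphic $\QWI$-module. This comes down to two compatibilities. First, the actions must agree under the embeddings, which follows from $e_{m+1,pi}(w_{m+1}^{p\ell})=e_{m,i}(w_m^{\ell})$ via \eqref{endact}. Second, one must handle the comultiplication carefully, since $\overline{H}_{m,n}$ is a sub-algebra but not a sub-coalgebra of $H_m$; here the relevant fact is that the product of two elements of $\mathbb{Q}(w_m)\#\overline{H}_{m,n}$ again lands in that sub-algebra (as is transparent in the matrix picture, where these are the endomorphisms supported on the columns indexed by $p^{m-n}\mathbb{Z}_{p^n}$), so the multiplication is well defined and coherent across the transition maps. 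The remaining ingredients --- cofinality and commuting the finite-dimensional functor $Hom(\mathbb{Q}(w_n),-)$ past the direct limit --- are routine.
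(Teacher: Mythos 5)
Your proposal is correct and follows essentially the same route as the paper: identify each term $Hom(\mathbb{Q}(w_n),\mathbb{Q}(w_m))$ with $\mathbb{Q}(w_m)\#\overline{H}_{m,n}$ via the preceding proposition, note that the transition maps act on the Hopf part by the embedding $e_{m,i}\mapsto e_{m+1,pi}$, which is an isomorphism $\overline{H}_{m,n}\to\overline{H}_{m+1,n}$ since $\dim \overline{H}_{m,n}=p^n$ for all $m$, so the Hopf parts stabilize to $H_n$ while the scalars exhaust $\mathbb{Q}(w_{\infty})$. The only differences are cosmetic or in your favor: you dispose of the terms with $m<n$ by cofinality where the paper instead sets $\overline{H}_{m,n}=H_n$ for $m<n$, and you verify explicitly (the formula $w_m^j\#e_{m,i}\mapsto w_{m+1}^{pj}\#e_{m+1,pi}$ for the transition maps, and the multiplicative compatibility of the system) details that the paper leaves implicit.
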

\noindent This leads us to the main result for this section.
\begin{theorem}
$$
End(\QWI,\QWI)\cong \QWI\#\HI
$$
\end{theorem}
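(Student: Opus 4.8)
The plan is to assemble the two isomorphisms already in hand and then commute a smash product past an inverse limit. From the decomposition $End(\QWI)\cong\varprojlim_n\bigl[\varinjlim_m Hom(\mathbb{Q}(w_n),\mathbb{Q}(w_m))\bigr]$ and the preceding proposition $\varinjlim_m Hom(\mathbb{Q}(w_n),\mathbb{Q}(w_m))\cong\QWI\#H_n$, I would immediately obtain $End(\QWI)\cong\varprojlim_n(\QWI\#H_n)$. The whole theorem then reduces to identifying this inverse limit of smash products with $\QWI\#\HI=\QWI\#\varprojlim_n H_n$.

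The first real step is to pin down the transition maps of the inverse system $\{\QWI\#H_n\}$. These arise by restricting an endomorphism of $\mathbb{Q}(w_n)$ to the subfield $\mathbb{Q}(w_{n-1})=\mathbb{Q}(w_n^p)$, since $w_{n-1}=w_n^p$. Writing a general element of $\QWI\#H_n$ as $\sum_i v_i\# e_{n,i}$ with $v_i\in\QWI$ and using \eqref{endact} together with Proposition \ref{Hact}, this endomorphism sends $w_{n-1}^k=w_n^{pk}\mapsto v_{pk}\,w_{n-1}^k$, so its restriction is $\sum_k v_{pk}\# e_{n-1,k}$. Comparing with Lemma \ref{nuhopf}, which gives $\nu_{n,n-1}(e_{n,i})=e_{n-1,i/p}$ when $p\mid i$ and $0$ otherwise, this restriction is exactly $1\#\nu_{n,n-1}$. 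Since the $\nu_{n,n-1}$ are surjective Hopf-algebra maps (Lemma \ref{nu}) compatible with the actions on $\mathbb{Q}(w_n)$ and $\mathbb{Q}(w_{n-1})$, each $1\#\nu_{n,n-1}$ is an algebra map, so the inverse limit inherits an algebra structure and not merely a linear one.

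With the transition maps identified and $\HI=\varprojlim_n(H_n,\nu_{n,n-1})$, it remains to see that the smash (tensor) product commutes with this particular inverse limit, yielding $\varprojlim_n(\QWI\#H_n)\cong\QWI\#\HI$. I would establish this exactly as in the proof of part (a) of the theorem that $\TI\HI\cong\QI[\NI]$: realize $\HI$ as the subalgebra of coherent sequences inside $\prod_n H_n$, invoke \cite{Webb} (each finite extension $\mathbb{Q}(w_m)$ is finitely generated and projective over $\mathbb{Q}$, so $\mathbb{Q}(w_m)\otimes\prod_n H_n\cong\prod_n(\mathbb{Q}(w_m)\otimes H_n)$), and then pass to the direct limit over $m$, using that tensor commutes with $\varinjlim$, to pull $\QWI$ inside the product. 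Inside the ambient product $\prod_n(\QWI\#H_n)$ the sequences satisfying the coherence conditions imposed by the $1\#\nu_{n,n-1}$ are, on the one hand, precisely $\varprojlim_n(\QWI\#H_n)\cong End(\QWI)$, and on the other hand precisely the image of $\QWI\#\HI$, since $\HI\subseteq\prod_n H_n$ is itself the coherent-sequence subalgebra; the two subalgebras coincide.

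I expect this last interchange to be the main obstacle, since it is not formal: tensoring with the infinite-dimensional $\QWI$ does not commute with inverse limits (equivalently, with the infinite products hidden inside $\HI$) in general. The content of the argument is to push everything into $\prod_n(\QWI\#H_n)$, where the finiteness of each $\mathbb{Q}(w_m)$ and \cite{Webb} make the interchange legitimate componentwise, and then to verify that the coherence conditions cut out $\QWI\#\HI$ on the nose rather than a proper sub- or super-algebra. Finally one should confirm that the resulting bijection respects the smash multiplication, which follows from the $1\#\nu_{n,n-1}$ being algebra maps as noted above, so that the identification $End(\QWI)\cong\QWI\#\HI$ is one of $\mathbb{Q}$-algebras.
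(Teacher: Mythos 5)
Your route is the same as the paper's: the double-limit decomposition of $End(\QWI)$, the preceding proposition converting each $\varinjlim_m Hom(\mathbb{Q}(w_n),\mathbb{Q}(w_m))$ into $\QWI\# H_n$, and the identification of the transition maps with $1\#\nu_{n,n-1}$. Your explicit computation of that last point (restriction to $\mathbb{Q}(w_{n-1})=\mathbb{Q}(w_n^p)$ sends $\sum_i v_i\# e_{n,i}$ to $\sum_k v_{pk}\# e_{n-1,k}$, matching Lemma \ref{nuhopf}) is correct, and it is precisely the ``principal observation'' of the paper's proof, worked out in more detail than the paper gives.

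The gap is in your third paragraph, exactly where you predicted the main obstacle would be. It is not true that, inside $\prod_{n\geq 1}(\QWI\# H_n)$, the coherent sequences are ``precisely the image of $\QWI\#\HI$'' when $\QWI\#\HI$ is the algebraic smash product with underlying space $\QWI\otimes\HI$, which is how you use it. Your own Webb-style step shows only that $\QWI\otimes\prod_{n\geq 1}H_n$ equals the union over $m$ of $\prod_{n\geq 1}(\mathbb{Q}(w_m)\otimes H_n)$, i.e.\ the sequences of \emph{bounded level}; consequently the image of $\QWI\otimes\HI$ consists exactly of those coherent sequences $(x_n)$, $x_n=\sum_k u_{n,k}\# e_{n,k}$, whose coefficients $u_{n,k}$ all lie in a single finite-dimensional $\mathbb{Q}$-subspace of $\QWI$. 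The full limit $\varprojlim_n(\QWI\# H_n)\cong End(\QWI)$ is strictly larger: the endomorphism $\beta$ defined on the $\mathbb{Q}$-basis $\{1\}\cup\{w_n^k: n\geq 1,\ p\nmid k\}$ of $\QWI$ by $\beta(w_n)=w_n^2$ for every $n$ and $\beta=0$ on all other basis elements is a legitimate coherent family (it has $u_{n,1}=w_n$), yet $w_1,w_2,w_3,\dots$ span an infinite-dimensional space, so $\beta$ is not in the image. So tensoring with the infinite-dimensional $\QWI$ genuinely fails to commute with this inverse limit; the interchange you need is false, not merely unproved, and mirroring the proof of part (a) of the earlier theorem cannot repair it (the same subtlety is in fact already present there). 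The theorem survives only if one reads $\QWI\#\HI$ as the completed smash product $\varprojlim_n(\QWI\# H_n)$ taken along the maps $1\#\nu_{n,n-1}$ --- which is what the paper tacitly does, since its proof simply declares $\varprojlim_n(\QWI\# H_n)\cong\QWI\#\HI$ once the transition maps are identified. Under that reading your first two paragraphs already constitute the entire proof and the third should be discarded; under the algebraic reading, the claimed coincidence of the two subalgebras is the precise point of failure.
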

\begin{proof}
The principal observation needed is that the inverse limit
\begin{align*}
&\mathop{\lim_{\longleftarrow}}_{n} Hom(\mathbb{Q}(w_n),\QWI)\\
&\cong \mathop{\lim_{\longleftarrow}}_{n} \QWI\# H_n\\
\end{align*}
arises from the natural restriction maps, but these correspond exactly to the $\nu_{n,n-1}$ given earlier in the construction of $\HI$ which act as the identity on $\QWI$, that is:
$$
\mathop{\lim_{\longleftarrow}}_{n} \QWI\#\overline{H}_n\cong \QWI\#\HI
$$
so that the endomorphism ring of $\QWI$ is the latter smash product, making $\QWI/\mathbb{Q}$ a Hopf-Galois extension with respect to the action of $\HI$.
\end{proof}
The last consideration is if $\HI$ can be viewed within the Greither-Pareigis theory. We have that $\HI$ is a $\QI$-form (and therefore a $\QWII$-form) of the group ring $\mathbb{Q}\NI$. In terms of normal complements involving the Galois groups of the relevant intermediate extensions, namely
\begin{align*}
\NI&=Gal(\QWII/\QI)\\
\DI&=Gal(\QI/\mathbb{Q})\\
\NI\DI&=Gal(\QWI\QI/\mathbb{Q}\\
\end{align*}
the extension $\QWI / \mathbb{Q}$ is almost classical. (i.e. '$N$' is $\NI$) The delicate part is if $\NI$ can be viewed as a regular subgroup, and moreover, of what ambient symmetric group? The construction of $\HI$ parallels that of a profinite Galois group acting on a direct limit (union) of field extensions, where the restriction to a given sub-field in the chain corresponds to the action of the Galois group acting on that field extension. Here, $\HI$ acts by restriction on $\mathbb{Q}(w_n)$ as $H_n$ where, by Greither-Pareigis, there is a corresponding regular subgroup of $N_n\leq Perm(\Gamma_n/\Delta_n)$. Observe however that, as seen earlier, $\Gamma_n=N_n\Delta_n$ so that $\Gamma_n/\Delta_n=\{\sigma_n^i\Delta_n\}$ and where $N_n$ acts naturally on the left, just as it would act on itself via the left regular representation. (i.e. identify $Perm(\Gamma_n/\Delta_n)\cong Perm(N_n)$) In the limit, the analogue would be $\NI$ a regular subgroup of $Perm(\NI\DI/\DI)\cong Perm(\NI)$, via the left action, given that the left regular representation is the canonical example of a regular permutation group.\par
As such, in any related construction of an inverse limit of Hopf algebras acting on intermediate extensions, we should expect the restriction to any intermediate extension to also give rise to a regular subgroup embedded in the corresponding ambient symmetric group. And for the resulting Hopf algebra to be a form of a group ring over a profinite group, similarly embedded in the corresponding (infinite) ambient symmetric group.
\section{Other Radical Extensions}
As given in \ref{mine}, for a radical extension of the form $k(w)/k$, as one increases the number of $p$-th power roots of unity in the base field, the number of Hopf-Galois structures, including the number of almost classical structures increases as well. For example, $\mathbb{Q}(\zeta_1,w_n)/\mathbb{Q}(\zeta_1)$ has $p$ Hopf-Galois structures, all $p^{min(1,n-1)}=p^1=p$ of which are almost classical. Moreover, the $N$'s which arise are all cyclic of order $p^n$.
For the case of $\mathbb{Q}(\zeta_1,w_n)/\mathbb{Q}(\zeta_1)$ we have 
$$\Gamma_{n,1}=Gal(\mathbb{Q}(\zeta_n,w_n)/\mathbb{Q}(\zeta_1))=\langle\sigma_n,\beta_n\rangle$$
where $\langle\sigma_n\rangle=Gal(\mathbb{Q}(\zeta_n,w_n)/\mathbb{Q}(\zeta_n))$ which we shall denote by $N_{n,0}$, which is cyclic of order $p^n$, of course, and $\langle\beta_n\rangle=Gal(\mathbb{Q}(\zeta_n,w_n)/\mathbb{Q}(\zeta_1,w_n))$, which is cyclic of order $p^{n-1}$ We note, in passing, that $\Gamma_{n,1}$ is the Sylow $p$-subgroup of $Gal(\mathbb{Q}(\zeta_n,w_n)/\mathbb{Q})$ since $\langle\beta_n\rangle$ is the Sylow $p$-subgroup of $Gal(\mathbb{Q}(\zeta_n)/\mathbb{Q})$.

\noindent One can show (by \cite[Theorem 3.3]{Kohl1998}) that $N_{n,0}$ and $N_{n,i}=\langle\sigma^i\beta^{p^{n-2}}\rangle$ for $i\in U_p$ are the $p$ different normal complements to $\langle\beta_n\rangle$ in $\Gamma_{n,1}$, all of which are cyclic of order $p^n$ of course. If we denote by $E_{n,i}=(\mathbb{Q}(\zeta_n,w_n))^{N_i}$ then $\mathbb{Q}(\zeta_1,w_n)/\mathbb{Q}(\zeta_1)$ is Hopf-Galois with respect to the action of $H_{n,i}=(E_i[N_{n,i}])^{\langle\beta_n\rangle}$
$$\diagram
& \mathbb{Q}(\zeta_{n},w_{n}) \dlline_{N_{n,i}} \ddline^{\Gamma_{n,1}} \drline^{\langle\beta_n\rangle} \\
E_{n,i} \drline_{\langle\beta_n\rangle} & &\mathbb{Q}(\zeta_1,w_{n}) \dlline^{\ \ H_i=(E_{n,i}[N_{n,i}])^{\langle\beta\rangle}} \\
&\mathbb{Q}(\zeta_1)
\enddiagram$$
To see the relationship between the $N_{n,i}$ for different $n$, consider first the relationship between the $n=2$ and $n=3$ cases. First observe that $N_{0,2}\subseteq N_{0,3}$ where, concordantly $E_{2,0}=\mathbb{Q}(\zeta_2)=(\mathbb{Q}(\zeta_2,w_2))^{N_{2,0}}\subseteq(\mathbb{Q}(\zeta_3,w_3))^{N_{3,0}}=\mathbb{Q}(\zeta_3)=E_{3,0}$. For the other $N_{2,i}$ and $N_{3,j}$ we have the following.
$$\diagram
               &                   &   \mathbb{Q}(\zeta_3,w_3)\dllline_{N_{3,j}=\langle\sigma_3^{j}\beta_3^{p}\rangle\ \ }\dline\drrline^{\ \ \langle\beta_3\rangle\leftarrow\text{ order $p^2$}}   &    & \\
 E_{3,j}\drdotted_{?}  &                   &   \mathbb{Q}(\zeta_2,w_2)\dlline_{N_{2,i}=\langle\sigma_2^{i}\beta_2\rangle\ \   }\drline^{\ \ \langle\beta_2\rangle\leftarrow\text{ order $p$}}\ddline^{\Gamma_{2,1}}   &   & \mathbb{Q}(\zeta_1,w_3)\dlline \\
               &  E_{2,i}\drline    &                                                   &    \mathbb{Q}(\zeta_1,w_2)\dlline &  \\
               &                   &             \mathbb{Q}(\zeta_1)                    &                                  &\\
\enddiagram$$
We have that $Gal(\mathbb{Q}(\zeta_3,w_3)/\mathbb{Q}(\zeta_1))=\langle\sigma_3,\beta_3\rangle$ where $\sigma_3(w_3)=\zeta_3w_3$ of course, and $\beta_3$ generates the Sylow $p$-subgroup of $Gal(\mathbb{Q}(\zeta_3,w_3)/\mathbb{Q}(w_3))$, which, by natural irrationality, is isomorphic to the Sylow $p$-subgroup of $Gal(\mathbb{Q}(\zeta_3)/\mathbb{Q})$, namely $Gal(\mathbb{Q}(\zeta_3)/\mathbb{Q}(\zeta_1))$. As such $\beta_3(\zeta_3)=\zeta_3^{\pi^{(p-1)}}$ where $\pi$ is the primitive root mod $p$, which we observed earlier is the same for all higher powers of $p$. And since $w_3^p=w_2$ and $\zeta_3^p=\zeta_2$ then $Gal(\mathbb{Q}(\zeta_2,w_2)/\mathbb{Q}(\zeta_1))$ equals $\langle\sigma_2,\beta_2\rangle$ where $\sigma_3^p=\sigma_2$. Indeed, $\sigma_3(w_2)=\sigma_3((w_3)^p)=(\zeta_3w_3)^p=\zeta_2w_2=\sigma_2(w_2)$ and similarly $\beta_3(\zeta_2)=\beta_3((\zeta_3)^p)=\zeta_3^{p\pi^{(p-1)}}=\zeta_2^{\pi^{(p-1)}}=\beta_2(\zeta_2)$, that is the action of $\sigma_3$ restricts to $\sigma_2$ and $\beta_3$ to $\beta_2$. As such, for $E_{3,j}$ and $E_{2,i}$ given above, $E_{2,i}\subseteq E_{3,j}$ only when $i=j$. Moreover, we have a natural surjection $N_{3,i}\rightarrow N_{2,i}$ given by $\sigma_3^i\beta_3^{p}\mapsto\sigma_2^i\beta_2$. In general therefore, by viewing $Gal(\mathbb{Q}(\zeta_{n-1},w_{n-1})/\mathbb{Q}(\zeta_1)\subseteq Gal(\zeta_{n},w_{n})/\mathbb{Q}(\zeta_1))$ we have the following
\begin{proposition}
For $Gal(\mathbb{Q}(\zeta_n,w_n)/\mathbb{Q}(\zeta_1))=\langle\sigma_n,\beta_n\rangle$ then for $i\in\{0,\dots,p-1\}$ there is containment $E_{n-1,i}\subseteq E_{n,i}$ where $E_{n,i}$ is the fixed field of $N_{n,i}$. Moreover $\{N_{n,i},\nu_{n,n-1}\}$ (for $n\geq 3$) forms an inverse system where for $i=0$ $\nu_{n,n-1}(\sigma_n)=\sigma_{n-1}$, and for $i\in U_p$ that $\nu_{n,n-1}(\sigma_n^i\beta_n^{p^{n-2}})=\sigma_{n-1}^i\beta_{n-1}^{p^{n-3}}$.
\end{proposition}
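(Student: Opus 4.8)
The plan is to establish the two claims of the proposition separately: first the field-containment $E_{n-1,i}\subseteq E_{n,i}$, and then the inverse-system structure on the $\{N_{n,i}\}$. The unifying tool throughout is the compatibility of the Galois actions under restriction, which the preceding discussion already verified in the $n=2,3$ case via the identities $\sigma_n^p=\sigma_{n-1}$ (equivalently $\sigma_n(w_{n-1})=\sigma_{n-1}(w_{n-1})$) and $\beta_n(\zeta_{n-1})=\beta_{n-1}(\zeta_{n-1})$. So the first step is to record that $\mathbb{Q}(\zeta_{n-1},w_{n-1})\subseteq\mathbb{Q}(\zeta_n,w_n)$ and that under this inclusion the generators $\sigma_n,\beta_n$ of $\Gamma_{n,1}$ restrict to $\sigma_{n-1},\beta_{n-1}$ of $\Gamma_{n-1,1}$; this is exactly the general-$n$ analogue of the displayed computations $\sigma_3(w_2)=\zeta_2 w_2=\sigma_2(w_2)$ and $\beta_3(\zeta_2)=\zeta_2^{\pi^{(p-1)}}=\beta_2(\zeta_2)$, and it holds because $w_n^p=w_{n-1}$ and $\zeta_n^p=\zeta_{n-1}$ together with the fact (noted earlier) that the primitive root $\pi$ may be taken the same at every level.

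For the containment $E_{n-1,i}\subseteq E_{n,i}$, I would argue at the level of fixed fields. By definition $E_{n,i}=\mathbb{Q}(\zeta_n,w_n)^{N_{n,i}}$ and $E_{n-1,i}=\mathbb{Q}(\zeta_{n-1},w_{n-1})^{N_{n-1,i}}$. Take $x\in E_{n-1,i}$; since $x\in\mathbb{Q}(\zeta_{n-1},w_{n-1})\subseteq\mathbb{Q}(\zeta_n,w_n)$, it suffices to show $x$ is fixed by the generator of $N_{n,i}$. But the restriction map carries the generator of $N_{n,i}$ (either $\sigma_n$ when $i=0$, or $\sigma_n^i\beta_n^{p^{n-2}}$ when $i\in U_p$) to the generator of $N_{n-1,i}$ on the subfield $\mathbb{Q}(\zeta_{n-1},w_{n-1})$, by Step 1; hence its action on $x$ agrees with that of the generator of $N_{n-1,i}$, which fixes $x$. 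Therefore $x\in E_{n,i}$, giving the inclusion. Note the subtlety flagged in the text that $E_{n-1,i}\subseteq E_{n,i}$ holds precisely because the indices match, $i=j$, which is forced by the restriction behavior of $\beta_n$.

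For the inverse-system claim, the task is to exhibit the maps $\nu_{n,n-1}:N_{n,i}\to N_{n-1,i}$ and check the cocycle condition $\nu_{n,n-1}\circ\nu_{n+1,n}=\nu_{n+1,n-1}$. The maps are already dictated by Step 1: for $i=0$ send $\sigma_n\mapsto\sigma_{n-1}$, and for $i\in U_p$ send $\sigma_n^i\beta_n^{p^{n-2}}\mapsto\sigma_{n-1}^i\beta_{n-1}^{p^{n-3}}$, matching the generalization of the displayed surjection $\sigma_3^i\beta_3^p\mapsto\sigma_2^i\beta_2$. I would verify these are well-defined surjective group homomorphisms by confirming they respect the orders ($N_{n,i}$ cyclic of order $p^n$ mapping onto $N_{n-1,i}$ cyclic of order $p^{n-1}$) and that they are exactly the restriction-to-subfield maps on $\Gamma_{n,1}$, which are automatically homomorphisms; compatibility of the family then follows since restriction of restriction is restriction.

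The main obstacle I expect is bookkeeping the $\beta$-exponents correctly as $n$ varies. The generator involves $\beta_n^{p^{n-2}}$ because $\langle\beta_n\rangle$ has order $p^{n-1}$ and one needs the unique order-$p$ subgroup contribution to build a normal complement; tracking how $\beta_n^{p^{n-2}}$ restricts to $\beta_{n-1}^{p^{n-3}}$ under $\beta_n\mapsto\beta_{n-1}$ requires care, and one must confirm the restricted element indeed generates $N_{n-1,i}$ rather than a proper subgroup or the wrong complement. This amounts to checking that the exponent shift is consistent with $\beta_n(\zeta_{n-1})=\beta_{n-1}(\zeta_{n-1})$ so that the normal-complement structure is preserved at each level, which is the one place where a genuine (though routine) verification is needed.
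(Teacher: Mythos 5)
Your argument has a genuine gap, and it sits exactly at the step you defer in your final paragraph as ``routine'' bookkeeping. It is true (and is the content of the paper's $n=2,3$ computation) that restriction $\Gamma_{n,1}\rightarrow\Gamma_{n-1,1}$ sends $\sigma_n\mapsto\sigma_{n-1}$ and $\beta_n\mapsto\beta_{n-1}$. But it is \emph{false} that restriction carries the generator $\sigma_n^i\beta_n^{p^{n-2}}$ of $N_{n,i}$ to the generator $\sigma_{n-1}^i\beta_{n-1}^{p^{n-3}}$ of $N_{n-1,i}$. Since $\beta_{n-1}$ generates $Gal(\mathbb{Q}(\zeta_{n-1},w_{n-1})/\mathbb{Q}(\zeta_1,w_{n-1}))$, which is cyclic of order $p^{n-2}$, restriction preserves the exponent and therefore kills the $\beta$-part outright:
$$
\left(\sigma_n^i\beta_n^{p^{n-2}}\right)\Big|_{\mathbb{Q}(\zeta_{n-1},w_{n-1})}=\sigma_{n-1}^i\,\beta_{n-1}^{p^{n-2}}=\sigma_{n-1}^i,
$$
and no homomorphism compatible with $\beta_n\mapsto\beta_{n-1}$ can lower the exponent from $p^{n-2}$ to $p^{n-3}$. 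The element $\sigma_{n-1}^i$ generates $N_{n-1,0}$, not $N_{n-1,i}$; moreover $\sigma_{n-1}^i\notin N_{n-1,i}$ (the elements of $N_{n-1,i}$ with trivial $\beta$-component lie in $\langle\sigma_{n-1}^p\rangle$), so by the Galois correspondence it does \emph{not} fix $E_{n-1,i}$ pointwise. Hence for $i\in U_p$ your fixed-point argument collapses: carried out correctly, the restriction computation gives $N_{n,i}|_{\mathbb{Q}(\zeta_{n-1},w_{n-1})}=N_{n-1,0}$ for every $i$, so $E_{n,i}\cap\mathbb{Q}(\zeta_{n-1},w_{n-1})=E_{n-1,0}=\mathbb{Q}(\zeta_{n-1})$, and the route via restriction cannot produce the containment $E_{n-1,i}\subseteq E_{n,i}$ for $i\in U_p$ --- it produces an obstruction to it. Only the $i=0$ case ($\mathbb{Q}(\zeta_{n-1})\subseteq\mathbb{Q}(\zeta_n)$) survives.

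The same conflation undoes the inverse-system half of your write-up: the maps $\nu_{n,n-1}$ in the statement are \emph{not} ``exactly the restriction-to-subfield maps,'' precisely because restriction sends $\beta_n^{p^{n-2}}$ to the identity; so well-definedness and the compatibility ``restriction of restriction is restriction'' cannot be borrowed from field theory. The only reading under which the second assertion holds is to take $\nu_{n,n-1}$ as the abstract generator-to-generator surjection of cyclic groups $C_{p^n}\twoheadrightarrow C_{p^{n-1}}$ --- note that the paper's own surjection $N_{3,i}\rightarrow N_{2,i}$, $\sigma_3^i\beta_3^{p}\mapsto\sigma_2^i\beta_2$, is already not restriction, since restriction sends $\beta_3^p$ to $\beta_2^p=1$ --- and with that reading the inverse-system axioms are immediate but carry no field-theoretic content that could feed back into the containment claim. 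To be fair to you, the paper offers as proof only the $n=2,3$ discussion followed by ``in general therefore,'' and that discussion trades on the same identification, so you have faithfully reproduced the source's route; but as a free-standing proof it fails at the point named above, and the first assertion of the proposition requires either a different argument or a reformulation of what $E_{n-1,i}\subseteq E_{n,i}$ is supposed to mean.
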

Also, one can observe that $\langle\beta_n\rangle$ normalizes each $N_{n,i}$ so that $N_{n,i}$ is a normal complement to $\langle\beta_n\rangle$ in $Gal(\mathbb{Q}(\zeta_n,w_n)/\mathbb{Q}(\zeta_1))$. Also, since each $N_{n,i}$ is Abelian (and therefore its own opposite) then $\mathbb{Q}(w_n)/\mathbb{Q}(\zeta_1)$ is Hopf-Galois with respect to the action of $H_{n,i}=(E_{n,i}[N_{n,i}])^{\langle\beta_n\rangle}$ where each is a $E_{n,i}$-form of the group ring $\mathbb{Q}(\zeta_1)[N_{n,i}]$. If we define $\overline{\Delta}_n=\langle\beta_n\rangle$ then we may form the inverse limit $\overline{\Delta}_{\infty}$ of the system $\{\overline{\Delta}_n,\phi_{n,n-1}\}$ in the same fashion as we used to define $\DI$. Similarly, we may define $N_{\infty,i}=\underset{\longleftarrow}{lim}N_{n,i}$ and $E_{\infty,i}=\underset{\longrightarrow}{lim}E_{n,i}$, and $H_{\infty,i}=\underset{\longleftarrow}{\lim}H_{n,i}$. In a manner identical to that developed earlier, we have therefore that $\mathbb{Q}(w_{\infty},\zeta_1)/\mathbb{Q}(\zeta_1)$ is a Hopf-Galois extension with respect to the action of $H_{\infty,i}$, where $H_{\infty,i}\cong(E_{\infty,i}[N_{\infty,i}])^{\overline{\Delta}_{\infty}}$ and $E_{\infty,i}\otimes H_{\infty,i}\cong E_{\infty,i}[N_{\infty,i}]$. This shows that the non-uniqueness of the Hopf-Galois structures which may act on a given extension holds for infinite extensions such as these.
\bibliography{profinite}
\bibliographystyle{plain}
\end{document}